\DeclareMathOperator{\Ran}{Ran}
\DeclareMathOperator{\Dom}{Dom}
\DeclareMathOperator{\Ker}{Ker}
\DeclareMathOperator{\sign}{sgn}
\newtheorem{theorem}{Theorem}[section]
\newtheorem{lemma}[theorem]{Lemma}
\newtheorem{definition}[theorem]{Definition}
\theoremstyle{remark}
\newtheorem{remark}[theorem]{Remark}
\newtheorem{example}[theorem]{Example}
\newcommand\Fo{F^\circ}  
\newcommand\lo{\lambda^\circ}
\newcommand\cH{{\mathcal{H}}}
\newcommand\cM{{\mathcal{M}}}
\newcommand\cK{{\mathcal{K}}}
\newcommand\cL{{\mathcal{L}(\cH, \cK)}}
\newcommand\Id{{\mathrm{I}}}
\newcommand\spess{\sigma_{\mathrm{ess}}}
\newcommand\dK{\delta\!K}
\newcommand\cV{{\mathcal{V}}}
\newcommand\cC{{\mathcal{C}}}
\newcommand\cE{{\mathcal{E}}}
\newcommand\R{\mathbb{R}}
\newcommand\C{\mathbb{C}}
\newcommand\cN{{\mathcal{N}}}
\newcommand{\term}[1]{\textbf{#1}}
\title{Spectral shift via ``lateral'' perturbation}
\author{G.~Berkolaiko}
\author{P.~Kuchment}
\dedicatory{Dedicated to the memory of Misha Shubin, a wonderful
  mathematician and person}
\begin{document}
\date{\today}

\begin{abstract}
  We consider a compact perturbation $H_0 = S + K_0^* K_0$ of a
  self-adjoint operator $S$ with an eigenvalue $\lambda^\circ$ below
  its essential spectrum and the corresponding eigenfunction $f$.  The
  perturbation is assumed to be ``along'' the eigenfunction $f$,
  namely $K_0f=0$.  The eigenvalue $\lambda^\circ$ belongs to the
  spectra of both $H_0$ and $S$.  Let $S$ have $\sigma$ more
  eigenvalues below $\lambda^\circ$ than $H_0$; $\sigma$ is known as
  the spectral shift at $\lambda^\circ$.

  We now allow the perturbation to vary in a suitable operator space
  and study the continuation of the eigenvalue $\lambda^\circ$ in the
  spectrum of $H(K)=S + K^* K$.  We show that the eigenvalue as a
  function of $K$ has a critical point at $K=K_0$ and the Morse index
  of this critical point is the spectral shift $\sigma$. A version of this theorem also holds for some non-positive perturbations.
\end{abstract}

\maketitle

\section*{Introduction}\label{S:Intro}

The first step in the proofs of several spectral geometry theorems is
perturbing the operator ``along'' a given eigenfunction $f$.
To give a classical example, the Courant bound on the number of nodal
domains of the $n$-th eigenfunction $f=f_n$ of a Dirichlet Laplacian is
shown by introducing additional Dirichlet conditions along the zero
set of $f$.  The function $f$ is still an eigenfunction of the
perturbed operator and, as a consequence, the corresponding eigenvalue
$\lambda$ remains in the spectrum.

Recently, it was discovered that some nodal properties of eigenfunctions are related to stability with respect to perturbation of the original operator of suitably defined energy functionals.
More precisely, the nodal deficiency of the $n$-th eigenfunction $f_n$
on a manifold (defined as $n$ minus the number of the nodal domains of
$f_n$) is equal to the Morse index of the energy of the nodal
partition with respect to variation of the partition boundaries
\cite{BerKucSmi_gafa12}. On graphs, the nodal surplus (defined as the
number of zeros of $f_n$ minus $n-1$) is equal to the Morse index of
$\lambda_n$ considered as a function of the perturbation of the
Schr\"odinger operator by the magnetic field
\cite{Ber_apde13,Col_apde13,BerWey_ptrsa14}.  One is left wondering
what other types of perturbations can produce similar results. The
answer is presented in this paper. Essentially this is true for
\emph{any} ``sufficiently rich'' family of perturbations.

At this point, we set up notation and outline terms and conditions.
Let $\cH$ be a separable Hilbert space with the inner product
$\langle\cdot,\cdot\rangle$ (assumed linear with respect to the second
argument) and $S:\cH \to \cH$ be a self-adjoint operator bounded from
below.  Assume that below its essential spectrum, $S$ has an
eigenvalue $\lo$ with the eigenfunction $f$.  Consider further a
self-adjoint non-negative\footnote{Sign-indefinite perturbations will
  also be considered in the paper.  Here, for simplicity, we assume
  non-negativity.} perturbation operator $P$ such that $P f = 0$.
This is a perturbation ``along'' the eigenfunction $f$: $f$ is also an
eigenfunction of the perturbed operator $H := S + P$ with eigenvalue
$\lo$.  Assume that $\lo$ is simple in the spectrum of $S+P$.  If
$\lo$ has index $n$ in the spectrum of $H$, i.e.
$\lo = \lambda_n(S+P_0)$ then, due to positivity of $P_0$,
$\lo = \lambda_{n+\sigma}(S)$ with some integer $\sigma \geq 0$.  We
call this value $\sigma$ the \term{spectral shift}.  In the special
case when $P_0$ has rank $r < \infty$, one has $0 \leq \sigma \leq r$.

We remark that, in the hindsight, the theorems about nodal surplus or
deficiency mentioned above are in fact statements about the spectral
shift followed by some known relation between the index of the
eigenvalue and the nodal count for the perturbed operator $H$.  The spectral shift $\sigma$ and its relations to Morse indices is the primary object of interest here.

We now represent $P$ as a product\footnote{This is a positive perturbation, however more general perturbations are treated in the main result.} $P= K_0^* K_0$, where $K_0$ is
a compact operator from $\cH$ to an auxiliary Hilbert space $\cK$ and
$K_0 f = 0$.  We now allow the operator $K_0$ to vary, and consider
the continuation of the eigenvalue $\lo$ as a function of $K$.
Namely, we consider $\Lambda(K) := \lambda(A + K^* K)$ such that
$\Lambda(K_0) = \lo$.  Due to the standard perturbation theory,
this function is (real-)analytic with respect to $K$.  We will prove
that $\Lambda(K)$ has a critical point at $K = K_0$ and, if the family
of variations $K$ is ``rich enough,'' the Morse index of this critical point
is equal to the spectral shift $\sigma$. Here the \term{Morse index} is the number of negative eigenvalues of the Hessian at the critical point of the function.

By ``rich enough'' we mean the following.  Perturbations by operators
annihilating $f$ preserve $f$ as eigenfunction and do not affect the
eigenvalue.  We are interested in further (``\term{lateral}'')
perturbations, which do change the eigenvalue and carry information
about the spectral shift.  To capture the entirety of this information
(in the form of the Morse index), the family of variations has to be
transversal to the subspace of operators $K$ such that $Kf = 0$.

This result is important for a variety of extremal eigenvalue problems.  For example,
the question of optimizing an eigenvalue with respect to the
\emph{location} of a given perturbation has direct relevance to many
applications, such as, for instance, photonic crystals (where one is
interested in impurity modes in spectral gaps, to confine photons in
cavities), or civil engineering (where the perturbation
could be the introduction of extra supports in a beam structure, and
the first eigenvalue is proportional to the critical pressure at which
the structure will start to buckle).  As mentioned above, our result
also provides a unifying framework for the nodal counting theorems.
In this manuscript we derive and strengthen one of them as an example.
Finally, the classical tool of spectral theory, the Birman--Schwinger
operator (or Schur complement in linear algebra), arises naturally as the Hessian with respect to variation of
the perturbation.  Its eigenfunctions are interpreted as giving the
directions in which the eigenvalue changes the most.

\section{Main results in the simplified form}\label{S:results}

Let $\cH$ and $\cK$ be separable complex Hilbert spaces and denote by
$\cC:=\cC(\cH,\cK)$ the Banach space of compact linear operators from
$\cH$ to $\cK$.

Let $\lo$ be an eigenvalue of a bounded below self-adjoint operator
$S : \cH \to \cH$, lying below the essential spectrum of $S$; let $f$
be the corresponding eigenfunction.  Consider the perturbed operator
$S+K_0^*K_0$, $K_0 \in \cC$ and assume $K_0 f = 0$ so that
$\lo$ is also an eigenvalue of $S+K_0^*K_0$.  For a self-adjoint operator $A$ we denote by $N(\lo; A)$
the number of eigenvalues of $A$ below $\lo$ and denote by $\sigma$
the \term{spectral shift}
\begin{equation*}
  \sigma = N(\lo;S) - N(\lo; S+K_0^*K_0).
\end{equation*}
We will now allow the perturbation $K_0$ to vary in the most general
way, considering $H(K) = S+K^*K$ with $K$ ranging over an open
neighborhood of $K_0$ in $\cC$.

Denote by $F$ the subspace of $\cC$ consisting of the rank one
operators acting as $x\mapsto \langle f, x\rangle_\cH \psi$, where
$\psi \in \cK$.  The subspace $F$ is isometric to $\cK$ and we have
the direct\footnote{Throughout the paper we use the notation $\oplus$
  for a direct sum and the notation $\dot\oplus$ for an orthogonal sum of
  subspaces.  We note, however, that restricted to the
  subspace of $\cC$ consisting of Hilbert--Schmidt operators, the
  decomposition $F \oplus \Fo$ becomes orthogonal.} decomposition
$\cC = F \oplus \Fo$, where $\Fo$ is the subspace of operators
$K\in\cC$ vanishing on $f$, i.e. such that $Kf=0$.

Here is a
somewhat simplified version of the main result:

\begin{theorem}[Main Theorem --- a simplified form]
  \label{thm:allK_variation}
  Let $\lo$ be a simple eigenvalue of $S$ with eigenfunction $f$ and
  let $K_0f = 0$. Consider the family
  \begin{equation}
    \label{eq:HK_family}
    H(K) = S + K^* K, \qquad K \in \cC(\cH, \cK).
  \end{equation}
  Assume that the eigenvalue $\lo$ is also simple in the spectrum of $H(K_0)$ and let
  the function $\Lambda(K) := \lambda(H(K))$ be its real analytic
  continuation defined in a neighborhood of $K_0$ in $\cC$.  Then
  \begin{enumerate}
  \item $K=K_0$ is a critical point of the function $\Lambda(K)$,
  \item the Hessian of $\Lambda(K)$ at $K=K_0$ is zero on the space
    $\Fo$ and is reduced by the decomposition $\cC = F \oplus \Fo$,
  \item the Hessian restricted to $F$ is a quadratic form on $F$
    and its Morse index (number of its negative eigenvalues) is equal
    to the spectral shift $\sigma$.
  \end{enumerate}
\end{theorem}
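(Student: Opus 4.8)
The plan is to compute the first two derivatives of $\Lambda$ at $K_0$ by analytic perturbation theory for a simple isolated eigenvalue, simplify everything using $K_0 f = 0$, and then recognise the Hessian restricted to $F$ as a Schur complement whose inertia is pinned down by the classical additivity (Haynsworth) formula. Throughout, normalise $\|f\|=1$ and write $H_0 := H(K_0)$; along the line $K = K_0 + t\,\delta K$ we have $H(K) = H_0 + t(\delta K^* K_0 + K_0^*\delta K) + t^2\,\delta K^*\delta K$. The Feynman--Hellmann formula gives $D\Lambda(K_0)[\delta K] = \langle f, (\delta K^* K_0 + K_0^*\delta K) f\rangle = 2\,\mathrm{Re}\,\langle K_0 f, \delta K f\rangle_\cK$, which vanishes because $K_0 f = 0$; this is statement~(1).

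Since $D\Lambda(K_0)=0$, the second-order perturbation formula for a simple eigenvalue reads
\[
  \Hess\Lambda(K_0)[\delta K,\delta K] = \langle f, \ddot H f\rangle - 2\big\langle \dot H f,\ R\, \dot H f\big\rangle ,
\]
where $\dot H = \delta K^* K_0 + K_0^*\delta K$, $\ddot H = 2\,\delta K^*\delta K$, and $R$ is the reduced resolvent of $H_0$ at $\lo$ (the inverse of $H_0-\lo$ on $f^\perp$, extended by zero on $\C f$). Using $K_0 f = 0$ we get $\dot H f = K_0^*(\delta K f)$ and $\langle f,\ddot H f\rangle = 2\|\delta K f\|_\cK^2$, hence, since $R = R^*$,
\[
  \Hess\Lambda(K_0)[\delta K,\delta K] = 2\big\langle \delta K f,\ (\Id_\cK - K_0 R K_0^*)\,\delta K f\big\rangle_\cK .
\]
The right-hand side, and (by polarisation) the full Hessian bilinear form, depend on $\delta K$ only through $\delta K f\in\cK$; since $\delta K f = 0$ precisely when $\delta K\in\Fo$, the Hessian vanishes on $\Fo$ and has no $F$--$\Fo$ cross terms, which is statement~(2). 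Under the isometry $F\cong\cK$, $\delta K\mapsto\psi := \delta K f$, the restriction of the Hessian to $F$ is the quadratic form $\psi\mapsto 2\langle\psi,(\Id_\cK - K_0 R K_0^*)\psi\rangle$, whose Morse index is $n_-(\Id_\cK - K_0 R K_0^*)$, a finite number since $K_0$ is compact.

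It remains to show $n_-(\Id_\cK - K_0 R K_0^*) = \sigma$. As $f$ is a common eigenfunction of $S$ and $H_0$ for $\lo$ and $K_0 f = 0$, everything splits along $\cH = \C f\oplus f^\perp$: put $C := K_0|_{f^\perp}\colon f^\perp\to\cK$, $A := (S-\lo)|_{f^\perp}$, $B := (H_0-\lo)|_{f^\perp} = A + C^*C$. Both $A$ and $B$ are bounded, self-adjoint and boundedly invertible on $f^\perp$, with finitely many negative eigenvalues because $\lo$ lies below $\spess(S) = \spess(H_0)$ and is simple in each of $S$ and $H_0$; moreover $n_-(A) = N(\lo;S)$, $n_-(B) = N(\lo;H_0)$, and $K_0 R K_0^* = C B^{-1} C^*$. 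Consider the bounded self-adjoint operator
\[
  \cM := \begin{pmatrix} B & C^* \\ C & \Id_\cK\end{pmatrix}\quad\text{on } f^\perp\oplus\cK ,
\]
which has finitely many negative eigenvalues (it is a compact self-adjoint perturbation of a bounded-below diagonal operator). Eliminating the $(2,2)$- and then the $(1,1)$-block one obtains the two congruences
\[
  \cM = \begin{pmatrix}\Id & C^*\\ 0 & \Id\end{pmatrix}\!\begin{pmatrix}A & 0\\ 0 & \Id_\cK\end{pmatrix}\!\begin{pmatrix}\Id & 0\\ C & \Id\end{pmatrix},\qquad
  \cM = \begin{pmatrix}\Id & 0\\ CB^{-1} & \Id\end{pmatrix}\!\begin{pmatrix}B & 0\\ 0 & \Id_\cK - CB^{-1}C^*\end{pmatrix}\!\begin{pmatrix}\Id & B^{-1}C^*\\ 0 & \Id\end{pmatrix},
\]
with $A = B - C^*C$. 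Since congruence by a bounded invertible operator preserves the number of negative eigenvalues, $n_-(\cM) = n_-(A) = n_-(B) + n_-(\Id_\cK - CB^{-1}C^*)$, whence
\[
  n_-(\Id_\cK - K_0 R K_0^*) = n_-(\Id_\cK - CB^{-1}C^*) = n_-(A) - n_-(B) = N(\lo;S) - N(\lo;H_0) = \sigma ,
\]
which is statement~(3).

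The routine-but-delicate part is the second-order computation: fixing the analytic normalised eigenfunction so that $\langle f,\dot f\rangle = 0$, getting $\ddot H = 2\,\delta K^*\delta K$ and the sign of the reduced-resolvent term right, and noting that the resolvent occurring is that of $H_0$, not of $S$. The genuinely structural point --- and the one I expect to require the most care in infinite dimensions --- is the inertia bookkeeping: one must check that $\lo$ lying below the common essential spectrum forces $A$, $B$ and $\cM$ to have only finitely many negative eigenvalues, so that $n_-$ behaves like a finite inertia index and the two Schur-complement congruences may legitimately be compared; the passage from $H_0$ back to $S$ then happens precisely through the first Schur complement $B - C^*C = A$.
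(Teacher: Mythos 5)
Your proposal is correct, and its decisive structural step --- identifying the Hessian on $F\cong\cK$ with the Schur complement $\Id_\cK - K_0(H_0-\lo)^{+}K_0^*$ and reading off its Morse index from the Haynsworth inertia additivity applied to a $2\times 2$ block operator --- is exactly the mechanism of the paper. The genuine difference is the route to the Hessian: the paper never invokes Rayleigh--Schr\"odinger formulas, but instead derives an exact implicit scalar equation for $\Lambda(K)$, namely \eqref{eq:eigenvalue_condition}, by applying the nullity version of the Haynsworth formula twice (a Birman--Schwinger-type reduction), and only then Taylor-expands; this buys uniformity under the weaker hypotheses of Theorem~\ref{thm:main_additive} ($\lo$ possibly degenerate in $S$, sign-indefinite $\Omega$). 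Your second-order perturbation computation is a legitimate and more elementary substitute under the simplifying hypotheses of the present statement; note that your factor of $2$ is only a normalization difference from the paper's $A_2$, and that $K_0RK_0^*=K_0(H_0-\lo)^{+}K_0^*$ for any generalized inverse because $\Ran K_0^*\perp f$. For the inertia step, the paper works on $\cH\oplus\cK$ with the generalized inverse $(H_0-\lo)^{+}$ and the two-sided identity \eqref{E:HaynsworthA-}, whereas your restriction to $f^\perp\oplus\cK$, where $B=(H_0-\lo)|_{f^\perp}$ is honestly invertible, is a clean simplification made possible by the simplicity of $\lo$ in both spectra; your two congruences are the factorization \eqref{eq:Schur_factorization} applied from each corner. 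One small correction: $A=(S-\lo)|_{f^\perp}$ and $B$ need not be bounded when $S$ is unbounded (it is only assumed bounded below). This is harmless --- the unit-triangular congruence factors involving $C$ and $CB^{-1}$ are bounded and boundedly invertible, so Lemma~\ref{L:sylvester} applies --- but the justification should run through those factors rather than through boundedness of the diagonal blocks.
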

By a ``critical point'' we mean that the $\R$-linear terms in the
analytic expansion of $\Lambda(K)$ at $K=K_0$ are zero.  By the
``Hessian'' we mean the quadratic terms of the real analytic expansion
of $\Lambda(K)$.  The theorem above directly follows from a more
general result, Theorem~\ref{thm:main_additive} in
Section~\ref{sec:proof_main}, where we drop such restrictions as the
simplicity of $\lo$ in the spectrum of $S$ and $H(K)$ being a positive
perturbation of $S$.

It is also not necessary to vary $K$ in \emph{all} possible directions
to recover the spectral shift as the Morse index of $\Lambda(K)$.
Restricting $K$ to a submanifold in $\cC$ transversal to $\Fo$ we will
obtain the same result in Theorems \ref{T:Curved} and \ref{T:Curved2}.

\section{Morse indices and Schur complements}
\label{sec:schur}

\subsection{Morse indices}

We define first the indices that are involved in our main results.  We
denote by $\sigma(H)$ the spectrum of $H$ and by $\spess(H)$ its
essential spectrum, defined as the complement of the set of
$\lambda \in \C$ such that $H-\lambda$ is Fredholm.  We recall that
for self-adjoint operators, $\sigma(H)$ is the disjoint union of
$\spess(H)$ and the discrete spectrum $\sigma_d(H)$, i.e.\ the set of
isolated eigenvalues of finite multiplicity.

\begin{definition}
  \label{def:inertia}
  Let $H$ be a self-adjoint operator on $\cH$.  For an interval
  $I\subset \R$, we denote by $E_I$ the (projector-valued) spectral
  measure of $I$ corresponding to $H$.  We
  define two indices $i_-$ and $i_0$ (which may be infinite) as
  follows:
  \begin{align}
    \label{eq:inertia_calc}
    i_-: & = \dim\Ran E_{(-\infty,0)}, \\
    i_0: & = \dim\Ker H,
  \end{align}
  where $\Ker$ denotes the kernel of the operator and $\Ran$ denotes
  the range.

  We will refer to $i_-$ as the \term{Morse index} and to $i_0$ as
  the \term{nullity} of $H$.
\end{definition}

A well-known and very useful equivalent formula for $i_-$ (often
called Glazman's lemma, see e.g. \cite[Lemma 3.1 in Supplement
1]{ShubinBerezin_schrod}) looks as follows.

\begin{lemma}\label{L:subspaceindex}
  The Morse index $i_-$ is the maximal dimension of a subspace $\cM$
  on which operator $H$ is negative, i.e. $(x,Hx)<0$ for all
  $x\in\cM$, $x\neq 0$.
\end{lemma}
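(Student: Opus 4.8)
The plan is to prove the two inequalities separately, namely $i_- \le \dim\cM$ for some suitable subspace $\cM$ and $i_- \ge \dim\cM$ for \emph{every} subspace $\cM$ on which $H$ is negative; here I write $i_-$ for the quantity of Definition~\ref{def:inertia}. For the first inequality, I would simply take $\cM := E_{(-\infty,-\epsilon]}(\cH)$, the spectral subspace appearing in the definition of $i_-$. By the spectral theorem, for any nonzero $x$ in this subspace we have $(x, Hx) = \int_{(-\infty,-\epsilon]} t\, d\langle E_t x, x\rangle \le -\epsilon\|x\|^2 < 0$, so $H$ is (strictly) negative on $\cM$, and $\dim\cM = i_-$ by definition. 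Hence the maximal dimension of such a subspace is at least $i_-$.

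For the reverse inequality, let $\cM$ be any subspace (finite- or infinite-dimensional a priori) on which $(x,Hx)<0$ for all nonzero $x$. Set $\cN := E_{(-\epsilon,\infty)}(\cH)$, the orthogonal complement of $E_{(-\infty,-\epsilon]}(\cH)$; note that $\cN$ is the spectral subspace for the part of the spectrum in $(-\epsilon,\infty)$, which (since $0\notin\spess(H)$ and $\epsilon$ is smaller than the distance from $0$ to the rest of the spectrum) consists of $\Ker H$ together with spectrum in $[\epsilon,\infty)$. Consequently $H\restriction\cN \ge 0$: for $y\in\cN$, $(y,Hy) = \int_{(-\epsilon,\infty)} t\, d\langle E_t y, y\rangle = \int_{\{0\}\cup[\epsilon,\infty)} t\, d\langle E_t y, y\rangle \ge 0$. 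Now the key dimension-counting step: if $\dim\cM > i_- = \dim E_{(-\infty,-\epsilon]}(\cH) = \mathrm{codim}\,\cN$, then $\cM\cap\cN \ne \{0\}$, so there is a nonzero $x$ lying in both; it satisfies $(x,Hx)<0$ (from $x\in\cM$) and $(x,Hx)\ge 0$ (from $x\in\cN$), a contradiction. Therefore $\dim\cM\le i_-$.

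Combining the two inequalities gives that $i_-$ equals the maximal dimension of a subspace on which $H$ is negative, which is the claim.

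The only mildly delicate point is the dimension/codimension comparison in the second part when $i_-$ is infinite (so that $0$ might lie at the bottom of the essential spectrum and the hypothesis $0\notin\spess(H)$ would actually forbid this, so in fact $i_-<\infty$ is automatic here, but I would phrase the argument so it is robust): the statement ``$\dim\cM > \mathrm{codim}\,\cN$ forces $\cM\cap\cN\ne\{0\}$'' is the standard linear-algebra fact that a subspace and a closed subspace whose dimension and codimension do not leave room must intersect nontrivially, and it is the one step worth stating carefully. Everything else is a direct application of the spectral theorem, so I expect no real obstacle.
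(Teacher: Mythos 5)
Your proposal is correct and follows essentially the same route as the paper: one direction is witnessed by the spectral subspace $E_{(-\infty,-\epsilon]}(\cH)$, and the other is the same dimension count (the paper phrases ``$\cM\cap\cN\neq\{0\}$'' as the projection $E_{(-\infty,-\epsilon]}$ restricted to $\cM$ having a kernel, and uses that $H\geq 0$ on the orthogonal complement). One small slip in your closing aside: $0\notin\spess(H)$ does \emph{not} force $i_-<\infty$ (there may be essential spectrum strictly below $-\epsilon$), but since you phrase the intersection argument so that it survives $i_-=\infty$, this does not affect the proof.
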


%

This interpretation of the Morse index allows for a simple, general,
and surely well known proof of the classical \emph{Sylvester's law of
  inertia}:
\begin{lemma}\label{L:sylvester}
  Let $H$ be a self-adjoint operator on $\cH$ with domain $\Dom(H)$.
  If $S$ is a bounded
  invertible operator in $\cH$, then
  $S^*HS$ is self-adjoint on the natural domain
  $S^{-1}(\Dom(H))$ and 
  and
  \begin{align}
    \label{E:sylvester}
    i_-(H)&=i_-(S^*HS),\\
    \label{eq:sylvester0}
    i_0(H)&=i_0(S^*HS).
  \end{align}
\end{lemma}

\begin{proof}
  Since $(x,S^*HSx)=(Sx,HSx)$ on $S^{-1}(\Dom(H))$, the operator
  $S^{-1}$ establishes an isomorphism between subspaces in $\Dom(H)$
  and $\Dom(S^*HS)$, which preserves the negativity property (and in
  fact, the numerical range).
\end{proof}

\subsection{Schur complement; finite dimensional case}

We recall first the notion of the Schur complement in the matrix case. Let
\begin{equation}\label{E:Schur_def}
M=\left(
  \begin{array}{cc}
    A & B \\
    C & D \\
  \end{array}
\right)
\end{equation}
be a block-matrix, with the diagonal block $D$ being invertible.
\begin{definition}\label{D:Schur}
The matrix $A-BD^{-1}C$ is called the \term{Schur complement} of $D$ in $M$ (or just \term{Schur complement}, if no confusion can arise). We denote it as follows:
\begin{equation}
M/D:=A-BD^{-1}C
\end{equation}
\end{definition}
The name (introduced by E.~Haynsworth \cite{Hay_laa68}) comes from a well known J.~Schur determinant formula \cite{Schur}, which was based on a Gauss elimination procedure reducing $M$ to the form
\begin{equation}\label{E:Schur_red}
\left(
  \begin{array}{cc}
    A-BD^{-1}C & B \\
    0 & D \\
  \end{array}
\right).
\end{equation}

\subsection{Schur complement; unbounded operators}

Let operator $H$ be as in the beginning of the section, and $P_1$ be an orthogonal projector keeping
the domain $\Dom(H)$ invariant, i.e. $P_1\Dom(H)\subset \Dom(H)$.
We denote by $\cH_1$ and $\cH_2$ the ranges of projector $P_1$ and of the complementary projector $P_2:=I-P_1$ respectively.
We thus have the orthogonal decomposition
\begin{equation}
\cH=\cH_1 \dot\oplus\cH_2.
\end{equation}
Thus, operator $H$ can be represented in the block form
\begin{equation}
  H =
  \begin{pmatrix}
    A & B \\ \widetilde{B} & D
  \end{pmatrix},
\end{equation}
where all blocks are closed operators between the corresponding
spaces. Due to self-adjointness of $H$, it checks out that $A$ and $D$
are self-adjoint in the spaces $\cH_1$ and $\cH_2$ correspondingly
with the natural domains $P_i(\Dom(H))$. Also, the operator
$\widetilde{B}:\cH_1\to \cH_2$ is adjoint to $B:\cH_2\to \cH_1$.  We
thus end up with the decomposition
\begin{equation}
  \label{eq:block_form}
  H =
  \begin{pmatrix}
    A & B \\ B^* & D
  \end{pmatrix}.
\end{equation}
A thorough study of operators represented in this form can be found in
\cite{Tretter_block}.

We need to remind the reader the following notion:
\begin{definition}\label{D:M-P}
  An operator $D^+$ is said to be a \term{generalized inverse} to $D$
  if the following equality holds:
  \begin{equation}
    \label{eq:generalized_inverse}
    DD^+D=D.
  \end{equation}
  In other words, $D^+$ is a right inverse to $D$ on the range of $D$.
\end{definition}

\begin{remark}
  Different flavors of generalized inverses exist (see, for example,
  \cite[Chap.~9]{BenIsraelGreville_GenInverses}), but the above basic
  property is sufficient for our purposes.  The reader should notice
  that an operator $D^+$ satisfying \eqref{eq:generalized_inverse}
  always exists, for example defined on $\Ran(D)$, without requiring
  $D$ to be injective or surjective.  A particular choice, satisfying
  more restrictive conditions which guarantee uniqueness, is the
  \term{Moore--Penrose (pseudo-)inverse}.
\end{remark}

The following formula, proved originally for matrices, goes back at least
to a 1968 article by Haynsworth \cite{Hay_laa68}.

\begin{theorem}
  \label{thm:BirmanSchwinger}
  Let $H$ be a self-adjoint operator on $\cH$
  and let $\cH_1 \dot\oplus \cH_2 = \cH$ be the orthogonal
  decomposition described above, in particular $P_1 \Dom(H) \subset
  \Dom(H)$.
  \begin{enumerate}
  \item \label{item:Haynsworth_single} If $0\not\in\spess(D)$ and
    for some constant $C>0$ and all $x\in\cH_2$,
    \begin{equation}\label{E:B<D}
      \|Bx\|_{\cH_1}\leq  C\|Dx\|_{\cH_2},
    \end{equation}
    then, with any choice $D^+$ of the generalized inverse of $D$, the
    operator $A - B D^{+} B^*$ is self-adjoint and
    \begin{align}
      \label{eq:Haynsworth-}
      i_-(H) &= i_-(D) + i_-\left(A - B D^{+} B^*\right),\\
      \label{eq:Haynsworth0}
      i_0(H) &= i_0(D) +i_0\left(A - B D^{+} B^*\right),
    \end{align}
    assuming the relevant indices are finite.
  \item \label{item:Haysworth_double}
    If $0 \not\in \spess(A) \cup \spess(D)$ and, in addition to \eqref{E:B<D},
    \begin{equation}\label{E:B*<A}
      \|B^*x\|_{\cH_2}\leq  C\|Ax\|_{\cH_1},
    \end{equation}
    for some $C>0$, one has
    \begin{align}
      \label{E:HaynsworthA-}
      &i_-(A)-i_-(D)
      = i_-\left(A - B D^{+} B^*\right) - i_-\left(D - B^* A^{+} B\right),\\
      \label{E:HaynsworthA0}
      &i_0(A)-i_0(D)
      = i_0\left(A - B D^{+} B^*\right) - i_0\left(D - B^* A^{+} B\right),
    \end{align}
    assuming the relevant indices are finite.
  \end{enumerate}
\end{theorem}

\begin{remark}
  \label{R:schurcomp}\indent
  \begin{enumerate}
  \item Equations~\eqref{eq:Haynsworth-} and \eqref{eq:Haynsworth0}
    are known in the matrix case as the ``Haynsworth formula'',
    usually formulated under the condition of invertibility of $D$.
    Extended version using various flavors of generalized matrix
    inverses are also known, see e.g.\
    \cite{CarHayMar_siamjam74,HanFuj_laa85,Mad_laa88,Jon+_laa87,Tia_laa10}
    and \cite[Thm~A.1]{BerCanCoxMar_prep20}.  To the best of our
    knowledge, the present version might be the first one for unbounded
    operators with not necessarily invertible $D$ (however, several
    similar results are contained in \cite{Tretter_block}).  Extending
    Definition~\ref{D:Schur}, we will call the operator
    $A - B D^{+} B^*$ the \term{Schur complement} $M/D$ of $D$ in $M$.
  \item Condition (\ref{E:B<D}) implies the inclusion
    $\Ker D\subset \Ker B$. In finite dimension, they are equivalent.
  \item Part~\eqref{item:Haynsworth_single} of the theorem has a
    symmetric counterpart:
    if $0\not\in\spess(A)$ and (\ref{E:B<D}) is replaced with \eqref{E:B*<A},
     one has
     \begin{align}
       \label{eq:HaynsworthA-}
       i_-(H) &= i_-(A) + i_-\left(D - B^* A^{+} B\right),\\
       \label{eq:HaynsworthA0}
       i_0(H) &= i_0(A) +i_0\left(D - B^* A^{+} B\right),
     \end{align}
     assuming the relevant indices of $H$ and $A$ are finite.  This is
     used, in particular, to prove part~\eqref{item:Haysworth_double}
     of the theorem.
  \item Part~(\ref{item:Haysworth_double}) of the theorem shows that
    the spectral shift between the operators $A$ and $D$ is the same
    as between their Schur complements. In particular, if indices of
    $A$ and $D$ coincide, then those of $M/A$ and $M/D$ also do.
  \item According the Definition~\ref{def:inertia}, we need 0 to be
    away from the essential spectrum of the corresponding operator
    in order to have the indices $i_0$ and $i_-$ well-defined.  In
    particular, $i_0$ will be finite.  But we need not assume
    finiteness of $i_-$ in order to use \eqref{eq:Haynsworth0} or
    \eqref{E:HaynsworthA0}.
  \end{enumerate}
\end{remark}

Our proof of Theorem~\ref{thm:BirmanSchwinger} mostly adheres to the
existing proofs for matrices, except for the use of
Lemma~\ref{L:subspaceindex} instead of the original definition of the
indices. We prove the following auxiliary statement first.

\begin{lemma}
  \label{lem:BDinverse}
  Let $D$ be a self-adjoint operator on $\cH_2$ and let $0\not\in\spess(D)$.
  If condition~\eqref{E:B<D} holds for an operator $B:\cH_2 \to
  \cH_1$, then the following properties hold:
  \begin{enumerate}
  \item the operator $BD^+B^*$ does not depend on the choice of the
    generalized inverse $D^+$,
  \item \label{item:BDDB} for an arbitrary choice of $D^+$, we have
    $BD^+D = B$,
  \item \label{item:Moore} there exists a self-adjoint choice of
    $D^+$, such that the operator $BD^+$ is bounded,
  \end{enumerate}
\end{lemma}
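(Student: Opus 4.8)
The plan is to exploit the key structural consequence of condition~\eqref{E:B<D}, namely the inclusion $\Ker D\subset\Ker B$ (equivalently, $\Ran B^*\subset(\Ker D)^\perp=\overline{\Ran D}$, the closure being needed since $D$ need not have closed range in the unbounded setting), together with the fact that $0\notin\spess(D)$. The latter means that $\Ran D$ is closed and $\cH_1=\Ker D\oplus\Ran D$ orthogonally; let me write $Q$ for the orthogonal projector onto $\Ran D$. Then $D$ restricted to $\Ran D$ is a bijection onto $\Ran D$ with bounded inverse (here I would invoke that $0$ is isolated in $\sigma(D)$, so $\|Dx\|\ge\epsilon\|x\|$ for $x\in\Ran D\cap\Dom(D)$); call its inverse $D_0^{-1}\colon\Ran D\to\Ran D\cap\Dom(D)$, a bounded operator. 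The natural candidate generalized inverse is $D^+_{\mathrm{MP}}:=D_0^{-1}Q$, extended by zero on $\Ker D$; it is self-adjoint and bounded, and one checks $DD^+_{\mathrm{MP}}D=D$ directly by splitting into the two summands. This will be the witness for part~(3).

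For part~(2): any generalized inverse $D^+$ satisfies $DD^+D=D$, so $D(D^+D-\Id)=0$ on $\Dom(D)$, i.e. $(D^+D-\Id)$ maps $\Dom(D)$ into $\Ker D$. Since $\Ker D\subset\Ker B$ by Remark~\ref{R:schurcomp}(5), applying $B$ kills this difference: $B D^+D=B$ on $\Dom(D)$. (One should note $\Ran(D^+D)\subset\Dom(D)$ so the composition makes sense; also that $D^+D$ is defined on all of $\Dom(D)$, and agrees with $Q$ there modulo $\Ker D$.)

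For part~(1): if $D^+_1,D^+_2$ are two generalized inverses, I want $BD^+_1 B^*=BD^+_2 B^*$. Write $x=B^*y$ for $y\in\cH_2$ (it suffices to treat such $x$, then extend by closedness/boundedness once we know boundedness; or argue on $\Ran B^*$ and its closure). The inclusion $\Ran B^*\subset\overline{\Ran D}$ from \eqref{E:B<D}, and the approximation estimate: from $\|Bx\|\le C\|Dx\|$ one gets, by a standard duality/closed-graph argument, that $B^*$ factors through $D$ in the sense that there is a bounded operator $G\colon\overline{\Ran D}\to\cH_2$ with $B^*=G\,\overline{D}$ on a suitable core — more concretely, $B D^+$ extends to a bounded operator independent of $D^+$ because on $\Ran D$ the generalized inverse is forced to be $D_0^{-1}$ modulo something landing in $\Ker D$, and $B$ annihilates $\Ker D$. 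So $BD^+_1$ and $BD^+_2$ agree on $\Ran D$, hence on $\overline{\Ran D}\supset\Ran B^*$ after establishing boundedness, hence $BD^+_1B^*=BD^+_2B^*$; and this common operator equals $BD^+_{\mathrm{MP}}B^*$.

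The main obstacle is the unbounded setting: making precise the claim that \eqref{E:B<D} forces $BD^+$ to be bounded and that $B^*$ has range inside $\overline{\Ran D}$ (not merely $\Ran D$), and handling the domains so that all compositions $BD^+D$, $BD^+B^*$ are legitimately defined. The clean way is to first establish part~(3) by explicit construction of $D^+_{\mathrm{MP}}$ and the bound $\|BD^+_{\mathrm{MP}}\|\le C$ — which follows from \eqref{E:B<D} applied to $x=D^+_{\mathrm{MP}}z$, giving $\|BD^+_{\mathrm{MP}}z\|\le C\|DD^+_{\mathrm{MP}}z\|=C\|Qz\|\le C\|z\|$ — and then derive (1) and (2) by comparing an arbitrary $D^+$ against $D^+_{\mathrm{MP}}$ using $\Ker D\subset\Ker B$.
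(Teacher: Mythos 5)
Your proposal is correct and follows essentially the same route as the paper: the inclusion $\Ker D\subset\Ker B$ (hence $\Ran B^*\subset\Ran D$, which is closed since $0\notin\spess(D)$) drives parts (1) and (2), and part (3) is obtained from the explicit Moore--Penrose-type inverse $D_0^{-1}\oplus 0$ together with the bound $\|BD^+z\|\le C\|DD^+z\|=C\|Qz\|\le C\|z\|$. The worries about extending from $\Ran D$ to its closure are moot for exactly the reason you note (closed range), so nothing essential is missing.
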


\begin{proof}
  Since zero is not in the essential spectrum of $D$, $D$ is Fredholm
  and its range $D$ is closed.  From inequality~\eqref{E:B<D} we have
  $\Ker D \subset \Ker B$ and therefore $\Ran B^* \subset
  \overline{\Ran D} = \Ran D$.

  Let now $D^+$ be an arbitrary generalized inverse of $D$.
  Equation~\eqref{eq:generalized_inverse} implies that
  $D(D^+D x - x)=0$ for any $x\in \Dom D$, or, equivalently
  \begin{equation}
    \label{eq:remainderDD}
    D^+Dx - x \in \Ker D \subset \Ker B.
  \end{equation}
  We apply $B$ to \eqref{eq:remainderDD} and obtain
  \begin{equation}
    \label{eq:BDDB}
    BD^+Dx - Bx = 0,
  \end{equation}
  establishing part~(\ref{item:BDDB}) of the lemma.

  Since $\Ran B^* \subset \Ran D$, for a given $y$ there exists an
  $x \in \Dom D$ such that $B^*y = Dx$.  Then \eqref{eq:BDDB} becomes
  $BD^+B^*y = Bx$ and, since $x$ did not depend on the choice of
  $D^+$, neither does the operator $BD^+B$.

  Finally, let $P$ be the orthogonal projection onto the range of $D$,
  then $D$ restricted to the space $P\cH_1=\Ran D$ is self-adjoint and has a
  bounded inverse, which we denote by $D_P^{-1}$.  The generalized
  inverse\footnote{This is, in fact, the Moore--Penrose inverse.}
  $P^+ = PD_P^{-1}P = D_P^{-1} \oplus 0$ is self-adjoint (the latter
  representation is with respect to the decomposition
  $\cH_1 = \Ran D \dot\oplus \Ker D$).  Furthermore, \eqref{E:B<D} yields
  \begin{equation*}
    \| B D^+ x \|_{\cH_2} \leq C \| D D^+ x \|_{\cH_1}
    = C \|P x \|_{\cH_1} \leq C \|x\|_{\cH_1},
  \end{equation*}
  establishing part~(\ref{item:Moore}).
\end{proof}

\begin{proof}[Proof of Theorem~\ref{thm:BirmanSchwinger}]
  According to the lemma, it is enough to prove
  \eqref{eq:Haynsworth-}-\eqref{eq:Haynsworth0} for one particular
  choice of $D^+$ and we will use the self adjoint $D^+$ such that the
  operator $BD^+$ is bounded. This implies boundedness and
  invertibility of the operator matrix
  \begin{equation*}
    Q :=
    \begin{pmatrix}
      I & B D^{+} \\
      0 & I
    \end{pmatrix}.
  \end{equation*}

  We can now represent the operator matrix $H$ as follows:
  \begin{equation}
    \label{eq:Schur_factorization}
    H =
    \begin{pmatrix}
      A & B \\ B^* & D
    \end{pmatrix}
    =
    Q
    \begin{pmatrix}
      A - B D^{+} B^* & 0 \\
      0 & D
    \end{pmatrix}
    Q^*.
  \end{equation}
  Indeed, direct
  calculation shows
  \begin{equation*}
    Q
    \begin{pmatrix}
      A - B D^{+} B^* & 0 \\
      0 & D
    \end{pmatrix}
    Q^*
    =
    \begin{pmatrix}
      A - B D^{+} B^* + B D^+ D (D^+)^*B^* & B D^+ D \\
      D(D^+)^* B^* & D
    \end{pmatrix},
  \end{equation*}
  and the identities $B D^+ D = B$ and $(D^+)^* = D^+$ do the rest.

  From Sylvester's law of inertia (Lemma~\ref{L:sylvester}), we
  have that $A - B D^{+} B^*$ is self-adjoint and
  \begin{equation}
    \label{eq:indexSchur}
    i_-\left(H\right)
    =
    i_-\left(
    \begin{pmatrix}
      A - B D^{+} B^* & 0 \\
      0 & D
    \end{pmatrix}
    \right) = i_-(D) + i_-\left(A - B D^{+} B^*\right),
  \end{equation}
  by definition~\ref{def:inertia} and the orthogonal decomposition of
  the spectral projectors of the block-diagonal operator matrix.  The equality
  for $i_0$ is established in the same way.

  To establish the second part of the theorem, we reverse the roles
  of $A$ and $D$ and (with estimate~\eqref{E:B*<A} playing the role of
  \eqref{E:B<D}) obtain
  \begin{align*}
    i_-(H) &= i_-(A) + i_-\left(D - B^* A^{+} B\right),\\
    i_0(H) &= i_0(A) +i_0\left(D - B^* A^{+} B\right),
  \end{align*}
  Using \eqref{eq:Haynsworth-} and \eqref{eq:Haynsworth0} to eliminate
  $i_-(H)$ and $i_0(H)$ we obtain the desired result.
\end{proof}

\begin{remark}
  The Schur complement technique (and its close relatives) is very
  natural and thus has been re-invented many times under various
  guises, e.g.\ as Dirichlet-to-Neumann operators, $m$-functions for
  ODEs, Birman--Schwinger approach, and probably many others.
\end{remark}

\section{The main result}\label{sec:proof_main}

Let $\cH$ and $\cK$ be separable complex Hilbert spaces and, as before, we denote by
$\cC(\cH,\cK)$ the Banach space of compact operators from $\cH$ to
$\cK$.

\begin{definition}\label{D:F}
  We denote by $F$ the subspace of $\cC(\cH,\cK)$ consisting of the
  operators $K_\psi$ acting as
  \begin{equation}
    \label{E:K_chi}
    K_\psi: x\in \cH \mapsto \langle f,x\rangle\psi,
  \end{equation}
  for some $\psi \in \cK$.

  The subspace $\Fo$ consists of operators $K$ such that $Kf=0$.
\end{definition}

\begin{remark}
  Alternatively, $F$ can be defined as the subspace of
  $K \in \cC(\cH,\cK)$ such that
  $\Ker K \supset f^\perp := \{u\in\cH \colon \langle f, u \rangle =
  0\}$.
\end{remark}

\begin{lemma}\label{L:FF}\indent
  \begin{enumerate}
  \item The correspondence $\psi\leftrightarrow K_\psi$ is an isometry
    between $\cK$ and $F$.
  \item  $F\oplus \Fo=\cC$.
  \end{enumerate}
\end{lemma}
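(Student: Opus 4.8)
The plan is to verify the two claims essentially by direct computation, since the statement is elementary linear algebra dressed up in operator language. For part (1), I would fix the normalization $\|f\|_\cH = 1$ (which is the implicit convention, since $f$ is an eigenfunction that we may rescale) and compute the operator norm of $K_\psi$ directly: for any $x \in \cH$ we have $\|K_\psi x\|_\cK = |\langle f, x\rangle|\,\|\psi\|_\cK \leq \|x\|_\cH\|\psi\|_\cK$ by Cauchy--Schwarz, and equality is attained at $x = f$, so $\|K_\psi\|_{\cC} = \|\psi\|_\cK$. Linearity of $\psi \mapsto K_\psi$ is immediate from \eqref{E:K_chi}, and surjectivity onto $F$ is the definition of $F$; injectivity follows from the norm computation (or from evaluating at $f$). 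Each $K_\psi$ is rank one, hence compact, so indeed $K_\psi \in \cC(\cH,\cK)$. This gives the claimed isometry.

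For part (2), I would first check $F \cap \Fo = \{0\}$: if $K_\psi \in \Fo$ then $0 = K_\psi f = \langle f,f\rangle\psi = \psi$, so $K_\psi = 0$. Then I would show $F + \Fo = \cC$ by exhibiting the decomposition explicitly. Given any $K \in \cC(\cH,\cK)$, set $\psi := Kf$ and write $K = K_\psi + (K - K_\psi)$. The first summand is in $F$ by construction. For the second, I compute $(K - K_\psi)f = Kf - \langle f,f\rangle Kf = 0$, so $K - K_\psi \in \Fo$; it is compact as a difference of compact operators. This proves $\cC = F \oplus \Fo$ as a direct (algebraic) sum, and since both $F$ (finite rank of each element notwithstanding, $F \cong \cK$ is closed as it is isometric to the complete space $\cK$) and $\Fo$ are closed subspaces with the projection $K \mapsto K_{Kf}$ bounded, the sum is also topological.

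I do not anticipate any serious obstacle here; the only points requiring a word of care are (i) making the normalization $\|f\| = 1$ explicit so that the isometry statement is literally correct, and (ii) noting that $\Fo$ is genuinely closed in $\cC$ and that the projection onto $F$ along $\Fo$, namely $K \mapsto K_{Kf}$, is bounded (with norm at most $\|f\|_\cH = 1$), so that ``$\oplus$'' is the topological direct sum and not merely the algebraic one — this is what justifies the later use of the decomposition $\cC = F \oplus \Fo$ in the analytic expansion of $\Lambda(K)$.
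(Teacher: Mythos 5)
Your proposal is correct and follows essentially the same route as the paper: the isometry via Cauchy--Schwarz with equality at $x=f$ (using $\|f\|=1$), and the explicit splitting $K = \langle f,\cdot\rangle Kf + \bigl(K - \langle f,\cdot\rangle Kf\bigr)$. The extra remarks you add (injectivity, $F\cap \Fo=\{0\}$, boundedness of the projection so the sum is topological) are sound and only make explicit what the paper leaves implicit.
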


\begin{proof}
  To compute the operator norm of $K_\psi$ we use Cauchy--Schwartz
  inequality, keeping in mind that $\|f\|=1$,
  \begin{equation*}
    \| K_\psi x \|_\cK = |\langle f,x\rangle | \|\psi\|_\cK
    \leq \|\psi\|_\cK \|x\|_\cH,
  \end{equation*}
  with equality achieved when $x = f$.

  The splitting of a $K \in \cC(\cH,\cK)$ is given explicitly by
  \begin{equation*}
    \langle f, \cdot \rangle Kf \in F
    \quad\mbox{and}\quad
    K - \langle f, \cdot \rangle Kf \in \Fo.
  \end{equation*}
\end{proof}
Let $H_0$ be a bounded below self-adjoint operator on $\cH$ and $\lo$
be its simple isolated eigenvalue with the corresponding normalized
eigenfunction $f$. Assume that the spectrum of $H_0$ below $\lo$
consists of finitely many eigenvalues of finite multiplicity.
Suppose also that $H_0$ is of the form
\begin{equation}
  \label{eq:T_representation}
  H_0 = S + K_0^* \Omega K_0,
  \qquad \mbox{with} \quad K_0 \in \Fo,\mbox{ i.e. } K_0f = 0,
\end{equation}
where $\Omega$ is a bounded invertible self-adjoint
operator\footnote{A simple and essentially sufficient example is when
  $\Omega = (-\Id_{\cK_-}) \oplus \Id_{\cK_+}$ with respect to some
  orthogonal decomposition $\cK_- \dot\oplus \cK_+ = \cK$, with
  $i_-(\Omega)= \dim(\cK_-) < \infty.$} on $\cK$, whose spectrum below
zero consists of finitely many eigenvalues of finite multiplicity, so
$i_0(\Omega)=0$ and $i_-(\Omega)<\infty$.

Since $K_0f=0$, $f$ is also an eigenfunction
of $S$ with the same eigenvalue $\lo$.  The essential spectrum of $S$
also lies above $\lo$, although $\lo$ may no longer be simple in the
spectrum of $S$.

Let, as before, $i_-(H_0-\lo)$ be the number of eigenvalues (counted
with multiplicity) of $H_0$ below $\lo$ and denote by $\sigma$ the
\term{spectral shift}
\begin{equation}
  \label{eq:spectral_shift}
  \sigma: = \sigma(\lo; S, H_0):= i_-(S-\lo) - i_-(H_0-\lo).
\end{equation}
\begin{remark}
Notice that when $\Omega$ is positive, the spectral shift is also positive.
\end{remark}

Consider the family of operators
\begin{equation}
  \label{eq:TK_family}
  H(K) = S + K^* \Omega K, \qquad K \in \cC(\cH,\cK),
\end{equation}
so, in particular, $H(K_0)=H_0$.

Since $\lo$ is a simple eigenvalue of $H_0 = H(K_0)$, there is a
real-analytic branch $\Lambda(K)$ of the eigenvalues of $H(K)$ that
is the continuation of $\lo$ defined to a neighborhood $\Pi$ of
$K_0$ in $\cC(\cH,\cK)$.  Real analyticity means, in particular, the existence
of the expansion
\begin{equation}
  \label{eq:analytic_expansion}
  \Lambda(K) = \lambda_0 + A_1(\dK) + A_2(\dK) +
  O(\|\dK\|^3),
\end{equation}
where $\dK := K - K_0$ and $A_m : \cC(\cH,\cK) \to \R$ is
homogeneous of degree $m$,
\begin{equation*}
  A_m(\alpha \, \dK) = \alpha^m A_m(\dK), \qquad \alpha \in \R.
\end{equation*}
If $A_1 \equiv 0$, we say that $K_0$ is a \term{critical point} of
$\Lambda(K)$; the quadratic term $A_2$ will be called the
\term{Hessian} of $\Lambda(K)$ at $K_0$.

\begin{theorem}[Main Theorem --- General Form]
  \label{thm:main_additive}
  ~
  \begin{enumerate}
  \item \label{item:CP} The function $\Lambda(K)$ has a critical point
    at $K=K_0$.
  \item \label{item:Hess_reduced} The Hessian $A_2$ of $\Lambda(K)$
    at $K_0$ is zero on the space $\Fo$ and is reduced by the
    decomposition $\cC(\cH,\cK)=F\oplus \Fo$ in the following
    sense: for any $\dK_\psi \in F$ and $\dK_a \in \Fo$,
    \begin{equation}
      \label{eq:reduced}
      A_2(\dK_\psi+\dK_a) = A_2(\dK_\psi).
    \end{equation}
    Restricted to $F$ (which is viewed as a Hilbert space isometric to
    $\cK$), the Hessian $A_2$ is a quadratic form.
  \item \label{item:Hess_index} The Morse index (cf.\
    Lemma~\ref{L:subspaceindex}) of the Hessian $A_2$ on $F$ is
    \begin{equation}
      \label{eq:morse_index}
      i_-\left(A_2 |_F\right) = \sigma + i_-(\Omega),
    \end{equation}
    where $\sigma$ is the spectral shift defined in
    \eqref{eq:spectral_shift}. In particular, if $\Omega$ is positive,
    then the Hessian's Morse index is equal to the spectral shift.
  \item \label{item:Hess_zero} The nullity of the Hessian $A_2$ on $F$
    is
    \begin{equation}
      \label{eq:nullity}
      i_0\left(A_2|_F\right) = m-1,
    \end{equation}
    where $m$ the multiplicity of the eigenvalue $\lo$ in the spectrum
    of $S$.  In particular, if $\lo$ is a simple eigenvalue of $S$,
    the critical point $K=K_0$ is non-degenerate with respect to
    variations $\dK \in F$.
  \item \label{item:operator} The quadratic form $A_2|_F$ corresponds
    to the bounded self-adjoint operator on $\cK$,
    \begin{equation}
      \label{eq:Hessian_operator0}
      Q := \Omega - \Omega K_0 \big(H_0-\lo\big)^{+} K_0^* \Omega,
    \end{equation}
    which is a compact perturbation of the operator $\Omega$.
  \end{enumerate}
\end{theorem}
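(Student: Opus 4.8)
The plan is to extract the first two terms of the analytic expansion \eqref{eq:analytic_expansion} by classical Rayleigh--Schr\"odinger perturbation theory, to read off parts \ref{item:CP}, \ref{item:Hess_reduced} and \ref{item:operator} directly, and then to recognize the Hessian operator $Q$ as the Schur complement of $H_0-\lo$ in a suitable $2\times2$ operator matrix, so that parts \ref{item:Hess_index} and \ref{item:Hess_zero} follow from Theorem~\ref{thm:BirmanSchwinger}.

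First I would write $\dK = K-K_0$ and expand $H(K) = H_0 + W_1 + W_2$ with $W_1 = W_1(\dK) := K_0^*\Omega\dK + \dK^*\Omega K_0$ and $W_2 = W_2(\dK) := \dK^*\Omega\dK$, both bounded and self-adjoint. Since $S$ is perturbed only by the bounded operator $K^*\Omega K$ (indeed polynomial in $\dK$), the domain $\Dom(S)$ is unchanged and standard analytic perturbation theory for the simple isolated eigenvalue $\lo$ of $H_0$ applies, giving
\[
  A_1(\dK) = \langle f, W_1 f\rangle, \qquad
  A_2(\dK) = \langle f, W_2 f\rangle - \big\langle W_1 f,\, (H_0-\lo)^{+} W_1 f\big\rangle,
\]
where $(H_0-\lo)^{+}$ is the reduced resolvent (the Moore--Penrose inverse of $H_0-\lo$, bounded because $\lo$ is isolated). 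Using $K_0 f = 0$ we get $\langle f, K_0^*\Omega\dK f\rangle = \langle K_0 f, \Omega\dK f\rangle = 0$ and likewise for the adjoint term, so $A_1\equiv 0$, proving \ref{item:CP}. Moreover $W_1 f = K_0^*\Omega\psi$ and $\langle f, W_2 f\rangle = \langle\psi,\Omega\psi\rangle$ with $\psi := \dK f\in\cK$, hence $A_2(\dK) = \langle\psi, Q\psi\rangle$ with $Q$ as in \eqref{eq:Hessian_operator0}. Since $A_2(\dK)$ depends on $\dK$ only through $\psi = \dK f$, it vanishes on $\Fo$ and satisfies \eqref{eq:reduced} (note $\dK_\psi f = \langle f,f\rangle\psi = \psi$), proving \ref{item:Hess_reduced}; under the isometry $F\cong\cK$ of Lemma~\ref{L:FF} the form $A_2|_F$ is exactly the quadratic form of $Q$. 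Finally $Q-\Omega = -\Omega K_0(H_0-\lo)^{+}K_0^*\Omega$ is bounded, self-adjoint and compact (compactness of $K_0$ times boundedness of $(H_0-\lo)^{+}$), which is \ref{item:operator}.

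For \ref{item:Hess_index} and \ref{item:Hess_zero} I would introduce on $\cH\oplus\cK$ the self-adjoint operator matrix
\[
  M = \begin{pmatrix} H_0-\lo & K_0^*\Omega \\ \Omega K_0 & \Omega \end{pmatrix},
\]
and compute its two Schur complements: since $\Omega$ is invertible, $M/\Omega = (H_0-\lo) - K_0^*\Omega\,\Omega^{-1}\,\Omega K_0 = S-\lo$ (using $H_0 = S+K_0^*\Omega K_0$), while $M/(H_0-\lo) = \Omega - \Omega K_0(H_0-\lo)^{+}K_0^*\Omega = Q$. I would then check the two relative-boundedness hypotheses of Theorem~\ref{thm:BirmanSchwinger}: $\|K_0^*\Omega x\|_\cH \le C\|\Omega x\|_\cK$ is immediate because $\Omega$ is boundedly invertible, and $\|\Omega K_0 x\|_\cK \le C\|(H_0-\lo)x\|_\cH$ holds because $H_0-\lo$ has closed range $f^\perp$ with bounded partial inverse of norm $1/\dist\!\big(\lo,\sigma(H_0)\setminus\{\lo\}\big)$ and $K_0$ annihilates the one direction $f$ that $H_0-\lo$ kills. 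The index bookkeeping is also clean: $\lo$ is isolated below $\spess(H_0)$, so $i_-(H_0-\lo)<\infty$ and $i_0(H_0-\lo)=1$; $S$ and $Q$ are compact perturbations of $H_0$ and $\Omega$, so $0\notin\spess(S-\lo)$ and $0\notin\spess(Q)$, while $i_0(S-\lo)=m$, $i_-(S-\lo)<\infty$ (bound-below operator with $\spess$ above $\lo$), $i_0(\Omega)=0$, $i_-(\Omega)<\infty$; hence all indices in play are well-defined and finite. Applying part~(\ref{item:Haysworth_double}) of Theorem~\ref{thm:BirmanSchwinger} with $A = H_0-\lo$ and $D = \Omega$ gives $i_-(H_0-\lo) - i_-(\Omega) = i_-(S-\lo) - i_-(Q)$ and $i_0(H_0-\lo) - i_0(\Omega) = i_0(S-\lo) - i_0(Q)$, which rearrange to $i_-(Q) = i_-(S-\lo) - i_-(H_0-\lo) + i_-(\Omega) = \sigma + i_-(\Omega)$ and $i_0(Q) = m-1+0$; since $i_-(A_2|_F) = i_-(Q)$ and $i_0(A_2|_F) = i_0(Q)$, this is \ref{item:Hess_index}--\ref{item:Hess_zero}.

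The main obstacle I anticipate is not the algebra but the analytic hygiene: making the Rayleigh--Schr\"odinger expansion rigorous when $S$ is unbounded (resolved by the fact that the perturbation $K^*\Omega K$ is bounded and polynomial in $\dK$), and, more substantially, verifying the estimate $\|\Omega K_0 x\|\le C\|(H_0-\lo)x\|$ together with the ``indices well-defined and finite'' requirements of Theorem~\ref{thm:BirmanSchwinger}. Both hinge on the two structural facts that $\Ran(H_0-\lo)$ is closed (eigenvalue isolated) and $K_0 f = 0$ (perturbation ``along'' $f$), so I would isolate these as a short lemma before invoking the Schur complement machinery.
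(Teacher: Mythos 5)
Your proposal is correct, and it reaches parts (\ref{item:CP}), (\ref{item:Hess_reduced}) and (\ref{item:operator}) by a genuinely different route than the paper. For the analytic part you use classical Rayleigh--Schr\"odinger theory: since $H(K_0+t\,\dK)=H_0+tW_1+t^2W_2$ is a polynomial in $t$ with bounded self-adjoint coefficients, the standard formulas $A_1=\langle f,W_1f\rangle$ and $A_2=\langle f,W_2f\rangle-\langle W_1f,(H_0-\lo)^+W_1f\rangle$ apply, and $K_0f=0$ collapses them to $A_1\equiv0$ and $A_2(\dK)=\langle\psi,Q\psi\rangle$ with $\psi=\dK f$ --- exactly the paper's \eqref{eq:Hessian}. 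The paper instead derives an \emph{exact} scalar secular equation \eqref{eq:eigenvalue_condition} for $z=\Lambda(K)$ by applying the Haynsworth identity \eqref{E:HaynsworthA0} twice (once to pass from $H(K)-z$ to $-\Omega^{-1}-K(S-z)^{-1}K^*$, once to peel off the rank-one ``lateral'' piece $M_\psi$) together with Lemma~\ref{L:switch}; the Taylor coefficients are then read off from that equation. Your route is shorter and avoids Lemma~\ref{L:switch} and the continuity-at-$z=\lo$ discussion, at the cost of invoking perturbation theory as a black box; the paper's route yields the all-orders Birman--Schwinger-type fixed-point equation as a byproduct and keeps the whole argument inside the Schur-complement formalism. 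For parts (\ref{item:Hess_index}) and (\ref{item:Hess_zero}) your argument coincides with the paper's: the same block operator (with $A$ and $D$ in swapped positions, which is immaterial by Remark~\ref{R:schurcomp}) has Schur complements $S-\lo$ and $Q$, and part~(\ref{item:Haysworth_double}) of Theorem~\ref{thm:BirmanSchwinger} gives the two index identities. You are in fact more careful than the paper here: you verify the estimate $\|\Omega K_0x\|\leq C\|(H_0-\lo)x\|$ via the closed range of $H_0-\lo$ and $\Ker(H_0-\lo)\subset\Ker K_0$, and you check finiteness of all indices ($i_-(S-\lo)<\infty$ and $i_-(Q)<\infty$ via compact-perturbation invariance of the essential spectrum), where the paper only asserts that the hypotheses are ``clearly satisfied.'' No gaps.
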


\begin{remark}
  \label{rem:BirmanSchwinger}
  The operator $Q$ of \eqref{eq:Hessian_operator0} often arises in
  spectral analysis of perturbations of the form \eqref{eq:TK_family}
  (see \cite{KonKur_jfsut66,How_arma70,Yaf_scattering_v1}); it is an
  operator-valued Herglotz function \cite{GesKalMakTse_otaa01} which
  is well-known for its role in Birman--Schwinger principle and
  spectral shift, see
  \cite{GesMakNab_otaa99,Pus_ahp09,BehGesNak_ma18,BehElsGes_prep20}
  and references therein.  It is the Birman--Schwinger principle (see, e.g.
  \cite[Thm.~4.1]{Pus_ahp09}) that
  extracts parts \eqref{item:Hess_index} and \eqref{item:Hess_zero} of
  our Theorem from part~\eqref{item:operator}.  We keep
  our proof self-contained by relating everything to Schur complement
  and Theorem~\ref{thm:BirmanSchwinger}.  The link between Schur
  complement and Birman--Schwinger operator has also been observed
  before \cite{Tretter_block}.
\end{remark}

\begin{remark}
  The statement of the theorem may seem puzzling at first: how could
  any information about the operator $S$ be extracted from small
  perturbations of the ``far away'' operator $H_0$? This confusion is
  resolved by realizing that the operator $K_0$, whose small
  perturbations are used, is known, and thus $S$ is defined by $H_0$
  and $K_0$.
\end{remark}

\begin{remark}
  The spectral shift $\sigma$ defined by \eqref{eq:spectral_shift} can
  be negative, but it cannot exceed the rank of the negative part of
  the perturbation.  Thus $\sigma + i_-(\Omega) \geq 0$, which we
  would expect for a Morse index.
\end{remark}

\begin{proof}[Proof of Theorem~\ref{thm:main_additive}]
  Let $K$ be close to $K_0$ and $z$ be in a punctured neighborhood of
  $\lo$. The condition of $z$ being in the spectrum of $H(K)$ is
  equivalent to
  \begin{equation}
    1 = i_0\big( H(K)-z \big)
    = i_0\big( (S-z)-K^*(-\Omega)K \big).
  \end{equation}
  Consider the block operator on $\cH \oplus \cK$
  \begin{equation*}
    \begin{pmatrix}
      A & B \\ B^* & D
    \end{pmatrix}
    :=
    \begin{pmatrix}
      S-z & K^* \\ K & -\Omega^{-1}
    \end{pmatrix},
  \end{equation*}
  which is self-adjoint as a bounded perturbation of a self-adjoint
  block-diagonal operator.  The blocks $S-z$ and $-\Omega^{-1}$ are
  invertible and therefore $i_0(A)=i_0(D)=0$.  Using identity
  (\ref{E:HaynsworthA0}) of Theorem \ref{thm:BirmanSchwinger} we
  get\footnote{The fact that $i_-(-\Omega)$ is possibly infinite is of
    no concern since we are dealing with nullity only.} an equivalent
  condition for $z$ being equal to $\Lambda(K)$:
  \begin{equation}
    \label{eq:eig_cond}
    i_0\big( -\Omega^{-1}-K(S-z)^{-1}K^* \big)=1.
  \end{equation}
  We decompose $K$ in accordance to the direct sum $\cC(\cH,\cK) =
  F\oplus \Fo$, see Lemma~\ref{L:FF},
  \begin{equation}
    \label{eq:Kdecomp}
    K = K_\psi + K_a,
    \quad
    K_af = 0,
    \quad K_\psi = \langle f, \cdot \rangle \psi,
    \quad \mbox{with }\psi = Kf.
  \end{equation}
  The operators $K_a$ and $K_\psi$ are perturbations ``along'' $f$
  and ``lateral'' to it, correspondingly.  The operator in
  equation~\eqref{eq:eig_cond} can now be expanded as
  \begin{align*}
    \Omega^{-1} &+ (K_\psi + K_a) (S-z)^{-1} (K_a + K_\psi)^* \\
    &= \Omega^{-1} + K_a (S-z)^{-1} K_a^*
      + K_a (S-z)^{-1} K_\psi^* + K_\psi (S-z)^{-1} K_a^*
      + K_\psi (S-z)^{-1} K_\psi^* \\
    &= \Omega^{-1} + K_a (S-z)^{-1} K_a^*
      + \frac{1}{\lo-z} K_\psi K_\psi^*,
  \end{align*}
  where we used
  \begin{equation*}
    K_\psi^* = \langle \psi, \cdot \rangle f,
    \qquad
    (S-z)^{-1} K_\psi^* = \frac1{\lo-z} K_\psi^*,
    \qquad
    K_a K_\psi^* = 0,
  \end{equation*}
  to eliminate middle terms.  Furthermore, we can represent
  \begin{equation*}
    \frac1{\lo-z} K_\psi K_\psi^*
    = \frac1{\lo-z} \langle \psi, \cdot \rangle \psi
    = M_\psi \frac1{\lo-z} M_\psi^*,
  \end{equation*}
  where $M_\psi$ is the operator from $\mathbb{C}^1$ to $\cK$ acting as
  multiplication by $\psi$ and
  $M_\psi^* = \langle \psi, \cdot \rangle_\cK \colon \cK \to
  \mathbb{C}^1$ is its adjoint.

  We continue equation~\eqref{eq:eig_cond} with
  \begin{align}
    1
    &= i_0\big( -\Omega^{-1} - K(S-z)^{-1}K^* \big)
      \nonumber \\
    &= i_0\big( -\Omega^{-1} - K_a (S-z)^{-1} K_a^*
      - M_\psi \frac1{\lo-z} M_\psi^* \big)
      \nonumber \\
    \label{eq:i-0-almost}
    &= i_0\left(\lo - z
      + M_\psi^* \left(\Omega^{-1} + K_a(S-z)^{-1}K_a^*\right)^{-1}
      M_\psi \right),
  \end{align}
  where we used (\ref{E:HaynsworthA0}) on the bounded block operator on
  $\mathbb{C}^1\oplus\cK$ defined by
  \begin{equation*}
    \begin{pmatrix}
      A & B \\ B^* & D
    \end{pmatrix}
    :=
    \begin{pmatrix}
      \lo-z & M_\psi^* \\ M_\psi & -\Omega^{-1} - K(S-z)^{-1}K^*
    \end{pmatrix}.
  \end{equation*}
  The correction terms on the left-hand side of \eqref{E:HaynsworthA0}
  are zero because, for $z$ in a punctured neighborhood of $\lo$, the
  blocks $A$ and $D$ are invertible; the latter is due to the
  following simple lemma (see also \cite[Eqs.\
  (3.18)-(3.19)]{GesKalMakTse_otaa01}).

  \begin{lemma}
    \label{L:switch}
    For $z$ in a punctured neighborhood of $\lo$,
    \begin{equation}
      \label{E:switch}
      \left(\Omega^{-1} + K_a (S-z)^{-1} K_a^*\right)^{-1}
      = \Omega - \Omega K_a \big(H(K_a)-z\big)^{-1} K_a^* \Omega
    \end{equation}
  \end{lemma}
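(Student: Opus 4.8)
The plan is to recognize \eqref{E:switch} as an instance of the Woodbury/resolvent identity, reading it as the equality of two ways of computing the inverse of a Schur-complement-type expression, and then to verify it by a direct algebraic manipulation valid on the common domain. First I would recall that $H(K_a) = S + K_a^*\Omega K_a$, so the right-hand side of \eqref{E:switch} is exactly the ``Birman--Schwinger''-style operator one gets by applying the resolvent formula to the rank-controlled perturbation $K_a^*\Omega K_a$ of $S$. The cleanest route is to set $R := (S-z)^{-1}$ (which is bounded for $z$ in a punctured neighborhood of $\lo$, since $\lo$ is isolated in $\sigma(S)$ and $z\neq\lo$), observe that $K_a$ is compact and $\Omega$ is bounded invertible, and then simply multiply the two candidate inverses together and check the product is the identity.

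The key steps, in order: (i) fix $z$ in a small punctured disc around $\lo$, disjoint from the rest of $\sigma(S)$, so that $R=(S-z)^{-1}$ is bounded and $H(K_a)-z$ is invertible for $z$ in this disc minus possibly a discrete set — here I would invoke that $H(K_a)$ is a compact (self-adjoint) perturbation of $S$, so $\spess(H(K_a)) = \spess(S)$ lies above $\lo$, and only finitely many eigenvalues of $H(K_a)$ can accumulate near $\lo$; shrinking the neighborhood removes them, and at worst one excises $z=\lo$ and a finite set. (ii) Write the standard second resolvent identity: since $H(K_a)-z = (S-z) + K_a^*\Omega K_a$, one has
\begin{equation*}
(H(K_a)-z)^{-1} = R - R K_a^* \Omega\big(\Id + K_a R K_a^*\Omega\big)^{-1} K_a R,
\end{equation*}
valid wherever the middle factor is invertible. (iii) Substitute this into the right-hand side of \eqref{E:switch} and collect terms: the claim becomes the purely algebraic identity
\begin{equation*}
\Omega - \Omega K_a R K_a^*\Omega + \Omega K_a R K_a^*\Omega\big(\Id + K_a R K_a^*\Omega\big)^{-1} K_a R K_a^*\Omega
= \big(\Omega^{-1} + K_a R K_a^*\big)^{-1}.
\end{equation*}
Setting $T := K_a R K_a^*$ (a bounded self-adjoint operator on $\cK$), the left side is $\Omega - \Omega T\Omega + \Omega T\Omega(\Id+T\Omega)^{-1}T\Omega = \Omega - \Omega T(\Id+\Omega T)^{-1}\Omega$ after factoring, and one checks directly that this equals $(\Omega^{-1}+T)^{-1}$ by multiplying by $\Omega^{-1}+T$ on either side and simplifying. (iv) Finally, argue that since both sides of \eqref{E:switch} are meromorphic (indeed analytic on the punctured neighborhood after the excisions in step (i)) operator-valued functions agreeing on an open set, they agree wherever both are defined; equivalently, one notes the identity holds at every $z$ for which all inverses appearing make sense, which is a punctured neighborhood of $\lo$.

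The main obstacle is not the algebra — step (iii) is a finite computation — but the bookkeeping about invertibility in step (i): one must be sure that the factor $\Id + K_a R K_a^*\Omega$ (equivalently $\Omega^{-1} + T$, equivalently $\Omega + \Omega T\Omega$) is invertible on a genuine punctured neighborhood of $\lo$, not merely off a discrete set that might accumulate at $\lo$. This is where compactness of $K_a$ and the fact that $\lo \notin \spess(S)$ are essential: $T = K_a(S-z)^{-1}K_a^*$ is compact and depends analytically on $z$ near $\lo$, with at worst a pole at $z=\lo$; so $\Id + T\Omega$ is analytic-Fredholm of index zero near $\lo$, hence invertible on a punctured neighborhood unless it is nowhere invertible there, and the latter is excluded because $H(K_a)-z$ is invertible for $z$ slightly off the real axis. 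Once this is pinned down, the rest is the routine resolvent/Woodbury manipulation sketched above. I would also remark that \eqref{E:switch} can alternatively be read directly from Theorem~\ref{thm:BirmanSchwinger}\eqref{item:Haysworth_double} applied to the block operator with $A = S-z$ and $D = -\Omega^{-1}$ scaled appropriately, but the hands-on resolvent computation is shorter to present.
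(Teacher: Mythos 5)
Your proposal reaches the correct identity with the correct tool---the second resolvent identity for $S$ and $H(K_a)=S+K_a^*\Omega K_a$---but it is organized quite differently from the paper, and the detour it takes has a soft spot. The paper's proof is a direct verification: once $z$ lies in the resolvent sets of both $S$ and $H(K_a)$, one multiplies the two candidate inverses in \eqref{E:switch} together (in either order) and the second resolvent identity collapses the product to $I_\cK$; no Woodbury expansion of $\big(H(K_a)-z\big)^{-1}$, no intermediate factor $\Id+K_a(S-z)^{-1}K_a^*\Omega$, and no analytic-continuation step are needed. The only thing that genuinely has to be checked is that $H(K_a)-z$ is invertible on a \emph{bona fide} punctured neighborhood of $\lo$; the paper gets this from the observation that $K_af=0$ forces $\lo$ to remain an (isolated) eigenvalue of $H(K_a)$, while your step (i) arrives at the same conclusion via discreteness of $\sigma(H(K_a))$ below the unchanged essential spectrum, which is equally valid. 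What does not work as written is the closing argument of your last paragraph: the analytic Fredholm alternative applied on the punctured disc only yields invertibility off a set that is discrete \emph{in the punctured disc}, which may a priori accumulate at the puncture $\lo$---precisely the failure mode you set out to exclude. You could repair this with meromorphic Fredholm theory on the full disc, but it is simpler to note that the worry is self-inflicted: once $H(K_a)-z$ is invertible (your step (i)), the factor $\Id+K_a(S-z)^{-1}K_a^*\Omega$ is automatically invertible, with inverse $\Id-K_a\big(H(K_a)-z\big)^{-1}K_a^*\Omega$, again by the second resolvent identity. Dropping steps (ii)--(iv) in favor of directly multiplying the two expressions in \eqref{E:switch} recovers the paper's shorter proof.
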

  \begin{proof}[Proof of the Lemma]
    First we observe that since $K_a f = 0$, and $K_a-K_0$ is small,
    $\lo$ is an isolated eigenvalue of $H(K_a)$.  Therefore, $z$ is in
    the resolvent set of both $S$ and $H(K_a)$.  We can now use the
    second resolvent identity for the operators $S$ and
    $H(K_a) = S+K_a^*\Omega K_a$ to directly verify that the product,
    in any order, of
    \begin{equation*}
      \Omega^{-1} + K_a (S-z)^{-1} K_a^*
      \qquad\mbox{and}\qquad
      \Omega - \Omega K_a \big(H(K_a)-z\big)^{-1} K_a^* \Omega,
    \end{equation*}
    is equal to $I_\cK$.
  \end{proof}

  We apply Lemma~\ref{L:switch} to equation \eqref{eq:i-0-almost} to get
  \begin{equation*}
    i_0\left(\lo - z + M_\psi^* \big(\Omega - \Omega K_a
      \big(H(K_a)-z\big)^{+} K_a^* \Omega\big) M_\psi \right) = 1.
  \end{equation*}
  Obviously, the generalized inverse $\big(H(K_a)-z\big)^{+}$ coincides with
  the inverse of $H(K_a)-z$ in a punctured neighborhood of
  $\lo$.  However, because $\Ran(K_a^*)$ is orthogonal to
  $\Ker\big(H(K_a)-\lo\big)$, the expression
  $K_a \big(H(K_a)-z\big)^{+} K_a^*$ is now well-defined and
  \emph{continuous} in $z$ up to and including the point $z=\lo$.

  Finally, we use the definition of $M_\psi$ and observe that the
  argument of $i_0$ is a scalar, resulting in the scalar equation for
  $z$ to be the eigenvalue of $H(K)$, i.e. the value of
  $\Lambda(K) = \Lambda(K_a + K_\psi)$,
  \begin{equation}
    \label{eq:eigenvalue_condition}
    z = \lo + \left\langle \psi, \left(
        \Omega - \Omega K_a \big(H(K_a)-z\big)^{+} K_a^* \Omega
      \right) \psi \right\rangle.
  \end{equation}

  We now use the analyticity of $\Lambda(K)$ to estimate the relevant
  terms with respect to the perturbation $\dK = K-K_0$,
  \begin{align*}
    &z = \Lambda(K) = \lo + O(\|\dK\|),\\
    &\psi = Kf = (K-K_0)f = O(\|\dK\|),\\
    &K_a = K_0 + O(\|\dK\|).
  \end{align*}
  Keeping only the leading order of the scalar product in
  \eqref{eq:eigenvalue_condition} results in
  \begin{equation}
    \label{eq:eigenvalue_quadratic_expansion}
    \Lambda(K) = \lo + \left\langle \psi, \left(
        \Omega - \Omega K_0 \big(H_0-\lo\big)^{+} K_0^* \Omega
      \right) \psi \right\rangle + O(\|\dK\|^3).
  \end{equation}

  Comparing with expansion~\eqref{eq:analytic_expansion} we
  immediately identify
  \begin{align}
    \label{eq:gradient}
    &A_1(\dK) \equiv 0,\\
    \label{eq:Hessian}
    &A_2(\dK) = \left\langle \dK f, \left(
      \Omega - \Omega K_0 \big(H_0-\lo\big)^{+} K_0^* \Omega
      \right) \dK f \right\rangle.
  \end{align}
  Since $\dK f = \dK_\psi f = \psi$, the Hessian $A_2$ does not depend
  on the part of the perturbation from $\Fo$, completing the proof of
  parts (\ref{item:CP}) and (\ref{item:Hess_reduced}) of the theorem.
  The Hessian $A_2$ restricted to $F$ identified with $\cK$ (see
  Lemma~\ref{L:FF}) corresponds to the self-adjoint operator $Q
  \colon \cK \to \cK$,
  \begin{equation}
    \label{eq:Hessian_operator}
    Q := \Omega - \Omega K_0 \big(H_0-\lo\big)^{+} K_0^* \Omega,
  \end{equation}
  which is a compact perturbation of the bounded operator $\Omega$,
  establishing part~(\ref{item:operator}) of the Theorem.

  Aiming to use Theorem \ref{thm:BirmanSchwinger} again, we let
  \begin{equation*}
    \begin{pmatrix}
      A & B \\ B^* & D
    \end{pmatrix}
    :=
    \begin{pmatrix}
      \Omega & \Omega K_0 \\
      K_0^*\Omega & H_0-\lo
    \end{pmatrix},
  \end{equation*}
  which is self-adjoint as a bounded perturbation of a block-diagonal operator.
  We compute
  \begin{align*}
    D-B^*A^+B&= H_0 -\lo - K_0^*\Omega K_0 = S-\lo,\\
    A-BD^+B^*&=\Omega - \Omega K_0(H_0-\lo)^{+} K_0^*\Omega = Q.
  \end{align*}
  Since the conditions of part (\ref{item:Haysworth_double}) of
  Theorem \ref{thm:BirmanSchwinger} are clearly satisfied, we can use
  equations~\eqref{E:HaynsworthA-} and \eqref{E:HaynsworthA0} to get
  \begin{equation}
    i_-(\Omega)-i_-(H_0-\lo)
    =i_-(Q)-i_-(S-\lo)
  \end{equation}
  and
  \begin{equation}
    i_0(\Omega)-i_0(H_0-\lo)
    =i_0(Q)-i_0(S-\lo).
  \end{equation}
  Taking into account notation $\sigma=i_-(S-\lo)-i_-(H_0-\lo)$ and
  $m=i_0(S-\lo)$, as well as the identities $i_0(\Omega)=0$ and
  $i_0(H_0-\lo)=1$, we get statements (\ref{item:Hess_index}) and
  (\ref{item:Hess_zero}) of the theorem.
\end{proof}

\subsection{Restricted variation}
\label{sec:curved}

It is also possible to restrict variations $K$ of $K_0$ to live on a
submanifold of $\cL$.  The next results specify how much
freedom of variation is enough to capture the right Morse index.

Assume, as before, that $\lo$ is simple eigenvalue of $S+K_0^*K_0$ and
an eigenvalue of $S$ with the same eigenfunction $f$ (the latter may
have a multiplicity $m$). Let also subspaces $F,\Fo\subset \cC$ be
defined as before. We will also denote by $\Pi$ the projector onto $F$
parallel to $\Fo$. After identifying $F$ with $\cK$, this mapping
becomes very simple: $K\mapsto Kf$.

Let $\cN\subset \cC$ be a real $C^2$-smooth Banach sub-manifold, such
that $K_0\in \cN$, and let $T_{K_0}\cN\subset \cC$ denote the tangent
space to $\cN$ at $K_0$.

We will be interested in the perturbations of the following form:
\begin{equation}
\Lambda(K):=K\in\cN \mapsto \Lambda(S+K^*\Omega K).
\end{equation}
In particular, $\Lambda(K_0)=\lo$.  Now the following version of the
main result holds:

\begin{theorem}\label{T:Curved}
  Suppose that $\Pi: T_{K_0}\cN \to F$ is an isomorphism (which
  gives $T_{K_0}\cN$ the structure of a Hilbert space). Then
\begin{enumerate}
  \item The point $K_0$ is a critical
    point of the function $\Lambda: K\in\cN\mapsto
    \Lambda(S+K^*\Omega K)$;
  \item The Hessian of $\Lambda$ at $K_0$ is a quadratic form on
    $T_{K_0}\cN$ whose Morse index is equal to $\sigma + i_-(\Omega)$
    and whose nullity is $m-1$, where $\sigma$ is the spectral shift
    and $m = i_0(S-\lo)$.
  \end{enumerate}
\end{theorem}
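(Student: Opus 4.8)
The plan is to reduce the restricted-variation statement to the already-established Theorem~\ref{thm:main_additive} by changing coordinates near $K_0$ inside $\cC$, using that $\cN$ is transversal to $\Fo$. Concretely, since $\Pi\colon T_{K_0}\cN \to F$ is an isomorphism, the implicit function theorem (in the $C^2$ Banach-manifold category) produces, in a neighborhood of $0$ in $F$, a $C^2$ map $\psi \mapsto K(\psi) \in \cN$ with $K(0)=K_0$, $\Pi K(\psi) = K_0 f + \psi$ (identifying $F$ with $\cK$ via $K\mapsto Kf$), and $DK(0)$ equal to the inverse of $\Pi|_{T_{K_0}\cN}$. Thus $K(\psi) = K_0 + \dK_\psi + g(\psi)$, where $\dK_\psi \in F$ is linear in $\psi$ and $g(\psi)\in\Fo$ (this last inclusion because $\Pi$ has been matched) with $g(\psi) = O(\|\psi\|^2)$. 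The point is that the $\Fo$-component of the perturbation enters only at second order and, crucially, along directions in $\Fo$.

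First I would write $\widetilde\Lambda(\psi) := \Lambda(S + K(\psi)^*\Omega K(\psi))$ and expand it using the expansion~\eqref{eq:eigenvalue_condition}--\eqref{eq:eigenvalue_quadratic_expansion} from the proof of Theorem~\ref{thm:main_additive}, with $\dK := K(\psi) - K_0 = \dK_\psi + g(\psi)$. The key input is formula~\eqref{eq:Hessian}: the Hessian of $\Lambda$ on $\cC$ depends on $\dK$ only through $\dK f$. Since $g(\psi) \in \Fo$ we have $g(\psi) f = 0$, so $\dK f = \dK_\psi f = \psi$ regardless of the quadratic correction $g$. Therefore the quadratic part of $\widetilde\Lambda(\psi)$ is exactly $\langle \psi, Q\psi\rangle$ with $Q$ the operator~\eqref{eq:Hessian_operator0}; the $g(\psi)$ term, being $O(\|\psi\|^2)$ and lying in $\Fo$, contributes only to the cubic-and-higher remainder of $\widetilde\Lambda$, because the $\Fo$-part of a perturbation enters the eigenvalue only through the Hessian's $\dK f$ (which it annihilates) and through still-higher-order terms. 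Similarly the linear term vanishes: $A_1 \equiv 0$ on all of $\cC$ by part~(\ref{item:CP}) of Theorem~\ref{thm:main_additive}, so $K_0$ is a critical point of the restricted function, giving statement~(1).

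For statement~(2), the Hessian of $\widetilde\Lambda$ at $\psi=0$ is the quadratic form $\psi \mapsto \langle \psi, Q\psi\rangle$ transported to $T_{K_0}\cN$ via the isomorphism $\Pi|_{T_{K_0}\cN}$ — or equivalently, it is the pullback of $A_2|_F$ under a linear isomorphism. By Sylvester's law of inertia (Lemma~\ref{L:sylvester}), a linear change of coordinates preserves both the Morse index and the nullity of a quadratic form, so the Morse index of the restricted Hessian equals $i_-(Q) = \sigma + i_-(\Omega)$ by part~(\ref{item:Hess_index}), and its nullity equals $i_0(Q) = m-1$ by part~(\ref{item:Hess_zero}). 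This completes the proof.

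The main obstacle, and the one point needing genuine care, is justifying that the second-order correction $g(\psi)\in\Fo$ really does not contaminate the Hessian of $\widetilde\Lambda$. This requires being precise about the chain rule for the second derivative of the composition $\psi \mapsto K(\psi) \mapsto \Lambda(\cdot)$: the Hessian of $\widetilde\Lambda$ is $A_2(DK(0)\psi) + DA_1(\,\cdot\,)[D^2K(0)(\psi,\psi)]$, and the second summand vanishes precisely because $A_1 \equiv 0$ identically on $\cC$ (not merely at $K_0$), so its derivative is zero as well. Thus the analyticity/$C^2$-smoothness of $\Lambda$ on $\cC$ together with $A_1 \equiv 0$ is exactly what kills the contribution of the curvature of $\cN$, and this is the step I would write out carefully rather than the routine inertia bookkeeping.
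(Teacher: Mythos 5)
Your proposal is correct and follows essentially the same route as the paper's own (much terser) proof: the paper likewise identifies the Hessian of the restricted function with the restriction of the ambient Hessian to $T_{K_0}\cN$ (the step you rightly flag as resting on $A_1\equiv 0$), invokes $A_2(K)=A_2(\Pi K)$ from Theorem~\ref{thm:main_additive}(\ref{item:Hess_reduced}), and transfers the Morse index and nullity via Sylvester's law of inertia (Lemma~\ref{L:sylvester}) applied to the isomorphism $\Pi|_{T_{K_0}\cN}$. One inessential slip: $g(\psi)=O(\|\psi\|^2)$ need not hold unless $T_{K_0}\cN=F$ --- in general $\cN$ is only a graph over $F$ and $g$ has a nonzero linear part valued in $\Fo$ --- but your argument only uses $g(\psi)\in\Fo$, hence $g(\psi)f=0$, together with $g(\psi)=O(\|\psi\|)$, so nothing breaks.
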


\begin{proof}
  The Hessian of $\Lambda$ on $\cN$ is the restriction of Hessian on
  $F \oplus \Fo$ to $T_{K_0}$.  For any $K\in T_{K_0}\cN$ we have
  $A_2(K) = A_2(\Pi K)$ by
  Theorem~\ref{thm:main_additive}(\ref{item:Hess_reduced}).  The rest
  follows from Lemma~\ref{L:sylvester} (with $S=\Pi$) and the results
  of Theorem~\ref{thm:main_additive}.
\end{proof}

It is straightforward to upgrade this theorem to the following less restrictive statement:

\begin{theorem}\label{T:Curved2}
  Suppose that $\Pi: T_{K_0}\cN \to F$ is surjective (i.e., $\cN$
  is transversal to $\Fo$ at their common point $K_0$). Then
\begin{enumerate}
  \item The point $K_0$ is a critical
    point of the function $\Lambda: K\in\cN\mapsto
    \Lambda(S+K^*\Omega K)$;
  \item The Hessian of $\Lambda$ at $K_0$ (which is a function on
    $T_{K_0}\cN$) pushes down to a quadratic form
    on the space $T_{K_0}\cN/(T_{K_0}\cN\bigcap \Fo)$.  The latter
    space is given Hilbert space structure by $\Pi$.
  \item the Morse index of this quadratic form is equal to
    $\sigma+i_-(\Omega)$ and its nullity is $m-1$.
  \end{enumerate}
\end{theorem}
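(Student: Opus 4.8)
The plan is to deduce Theorem~\ref{T:Curved2} from Theorem~\ref{T:Curved} by a standard quotient construction, exactly as the phrase ``straightforward to upgrade'' suggests. The only new phenomenon compared with Theorem~\ref{T:Curved} is that $\Pi$ restricted to $T_{K_0}\cN$ is now merely surjective rather than an isomorphism, so its kernel $T_{K_0}\cN \cap \Fo$ may be nontrivial; the point is that this kernel is precisely the ``invisible'' directions along which the Hessian vanishes, so the form descends to the quotient without losing any information.

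First I would record that $K_0$ is a critical point. This is immediate: $\Lambda$ on $\cN$ is the restriction of $\Lambda$ on $\cC(\cH,\cK)$, whose $\R$-linear term $A_1$ vanishes identically by Theorem~\ref{thm:main_additive}(\ref{item:CP}); restricting a function with zero linear part to a $C^2$ submanifold through $K_0$ still yields zero linear part in any local chart. Second, I would show the Hessian descends. For $K \in T_{K_0}\cN$ write the quadratic term of the restricted function; since $\cN$ is $C^2$-smooth, in a local chart centered at $K_0$ the second-order term of $\Lambda|_\cN$ equals $A_2$ evaluated on the tangent vector (the chart's curvature only contributes through $A_1$, which is zero). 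Hence the Hessian of $\Lambda|_\cN$ at $K_0$ is the function $K \mapsto A_2(K)$ on $T_{K_0}\cN$. By Theorem~\ref{thm:main_additive}(\ref{item:Hess_reduced}), $A_2(K) = A_2(\Pi K)$, so $A_2$ vanishes on $T_{K_0}\cN \cap \Fo = \Ker(\Pi|_{T_{K_0}\cN})$ and therefore factors through a well-defined quadratic form $\widehat{A_2}$ on the quotient $T_{K_0}\cN/(T_{K_0}\cN \cap \Fo)$. Since $\Pi|_{T_{K_0}\cN}$ is surjective, it induces a linear isomorphism of this quotient onto $F$, which transports the Hilbert structure of $F$ (isometric to $\cK$ via Lemma~\ref{L:FF}) to the quotient, and under this isomorphism $\widehat{A_2}$ is carried exactly to $A_2|_F$.

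Third, the index count is then a one-line consequence: the Morse index and nullity of $\widehat{A_2}$ equal those of $A_2|_F$, which are $\sigma + i_-(\Omega)$ and $m-1$ respectively by Theorem~\ref{thm:main_additive}(\ref{item:Hess_index})--(\ref{item:Hess_zero}); alternatively one can invoke Sylvester's law (Lemma~\ref{L:sylvester}) with the induced isomorphism playing the role of the similarity $S$, which also makes the ``well-defined and independent of the chart'' nature of these numbers transparent. The one point requiring a little care — and the closest thing to an obstacle — is the bookkeeping needed to make ``Hessian of a function on a manifold'' an invariant notion at a critical point: one must note that at a critical point the second derivative is chart-independent (no connection needed), and that passing from $\cC(\cH,\cK)$ to the submanifold $\cN$ commutes with extracting this second derivative. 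Both are classical facts about $C^2$ functions on Banach manifolds, so in the write-up I would state them briefly and cite the critical-point property rather than belabor the chart computation.
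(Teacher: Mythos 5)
Your proposal is correct and follows essentially the same route as the paper, which gives no separate argument for Theorem~\ref{T:Curved2} beyond the proof of Theorem~\ref{T:Curved}: restrict $A_2$ to $T_{K_0}\cN$, use $A_2(K)=A_2(\Pi K)$ from Theorem~\ref{thm:main_additive}(\ref{item:Hess_reduced}) to pass to the quotient by $\Ker(\Pi|_{T_{K_0}\cN})=T_{K_0}\cN\cap\Fo$, and transport the index and nullity via the induced isomorphism onto $F$ using Lemma~\ref{L:sylvester}. You have merely made explicit the quotient bookkeeping that the paper labels ``straightforward.''
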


\section{Examples and applications}

\subsection{A numerical example}
\label{sec:num_example}

We illustrate our results with a simple numerical example.  Consider
the $4\times4$ matrix family
\begin{equation}
  \label{eq:H_num}
  H(t, \vec{s}) =
  \begin{pmatrix}
    0 & 0 & 0 & 0 \\
    0 & 1 & 0 & 0 \\
    0 & 0 & -1 & 0 \\
    0 & 0 & 0 & -2
  \end{pmatrix}
  + t K(\vec{s})^* K(\vec{s}),
  \qquad t\in \R,\ \vec{s}\in\C^2,
\end{equation}
where
\begin{equation}
  \label{eq:K_num}
  K(\vec{s}) = K_0 + s_1 K_1 + s_2 K_2,
  \quad
  K_0 =
  \begin{pmatrix}
    0 & 0.5 & 0.5 & 1.5 \\
    0 & 1 & 2 & 1
  \end{pmatrix}.
\end{equation}
The choice of $K_1$ and $K_2$ is \emph{random}; the transversality
condition of section~\ref{sec:curved} is satisfied with probability
1.

\begin{figure}
  \centering
  \includegraphics[scale=0.5]{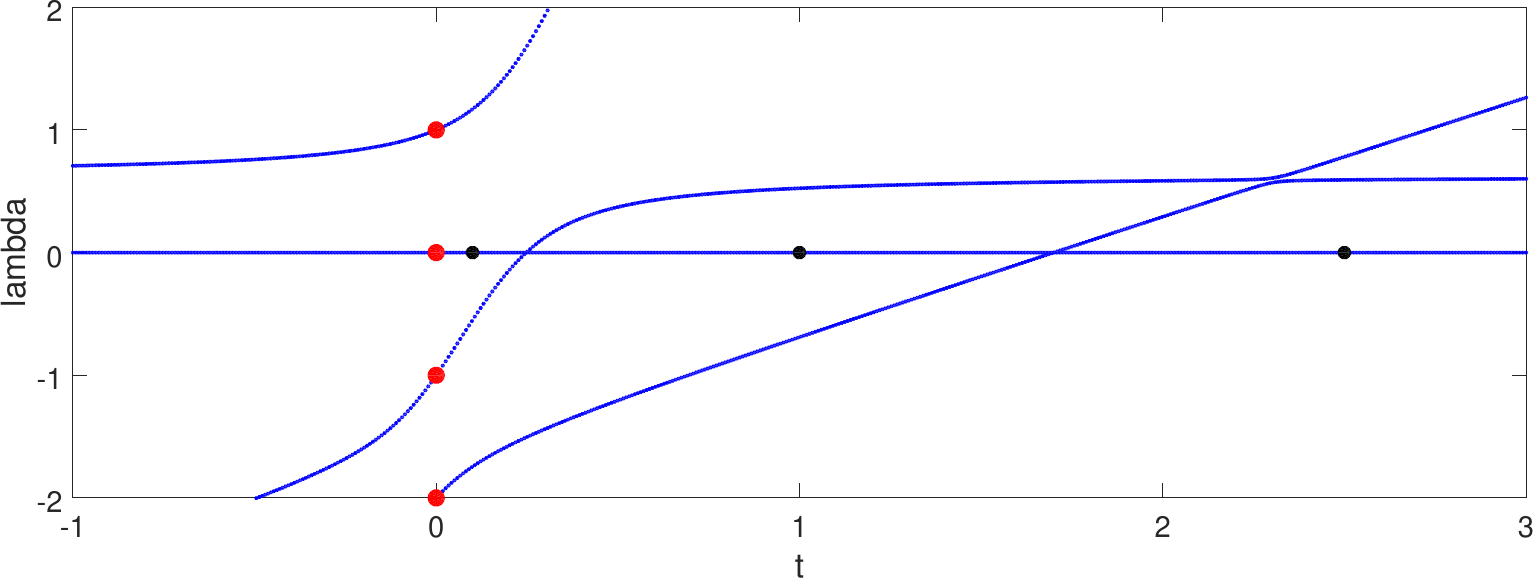}\\
  \includegraphics[scale=0.2]{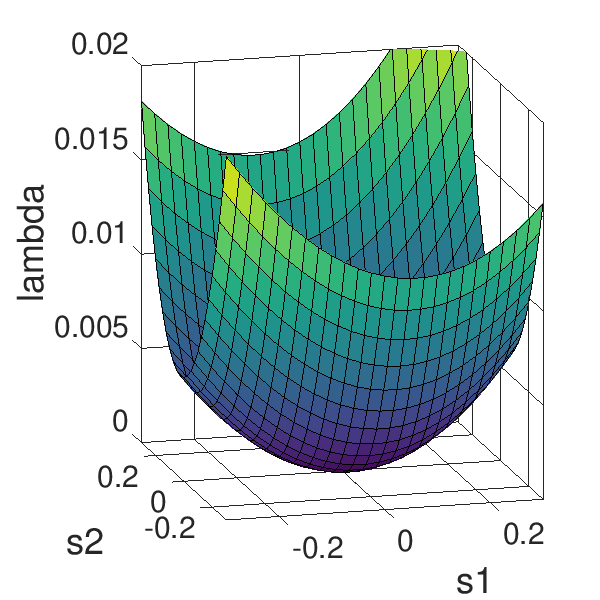}
  \includegraphics[scale=0.2]{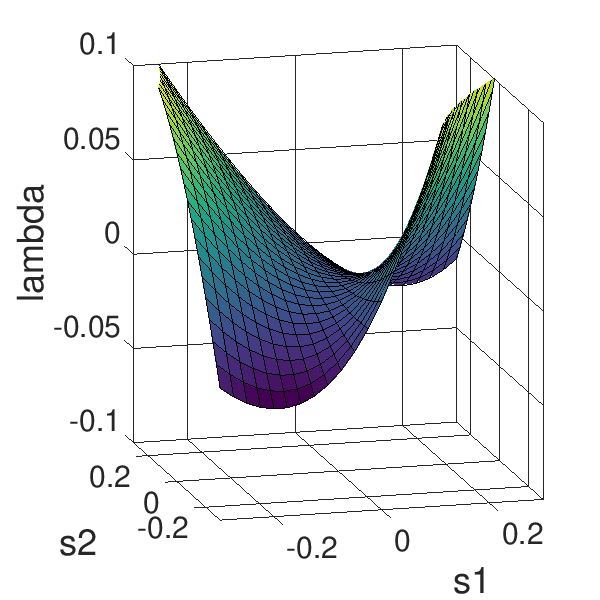}
  \includegraphics[scale=0.2]{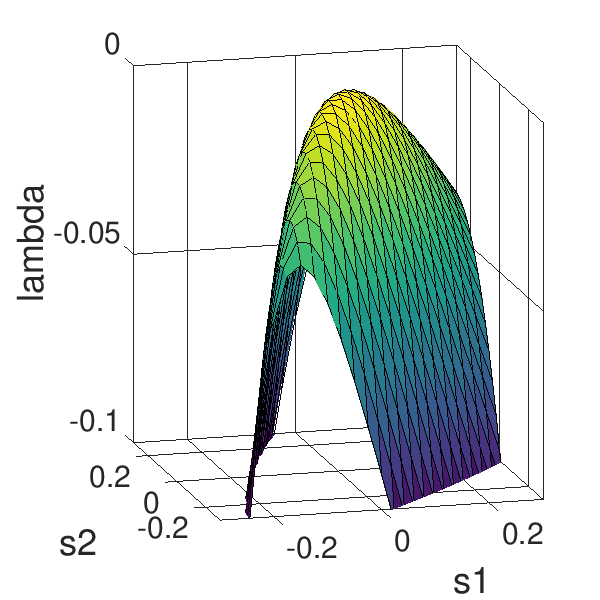}
  \caption{Top: the eigenvalues of $H(t,\vec0)$ as functions of $t$; red
    (larger) dots highlight the eigenvalues of $H(0,\vec0)$, black
    (smaller) dots indicate the values of $t$ where the lateral
    variation is explored in the bottom figures.  Bottom: the
    continuation of the eigenvalue 0 in the spectrum of
    $H(0.1,\vec{s})$, $H(1,\vec{s})$ and $H(2.5,\vec{s})$,
    correspondingly, shown as functions of $\vec{s}=(s_1,s_2)$.}
  \label{fig:spec_flow}
\end{figure}

The one-parameter family $H(t,\vec0)$ is a perturbation of $H(0,\vec0)$
along the eigenvector $e_1$ of the eigenvalue $0$.  As $t$ increases,
the eigenvalue $0$ remains constant while the other eigenvalues
increase, see Fig.~\ref{fig:spec_flow}(top).  This type of figure is
usually called \term{spectral flow}.

The spectral shift at $\lambda=0$ between $H(0,\vec0)$ and
$H(t,\vec0)$ is visualized as the number of eigenvalues crossing
$\lambda=0$ between $0$ and $t$.  Thus, at the values of $t = 0.1$,
$1.0$ and $2.5$, highlighted by black dots in
Fig.~\ref{fig:spec_flow}(top), the spectral shift is 0, 1 and 2
correspondingly.  The spectrum of the lateral variations at these
points (more precisely, the continuations of the eigenvalue 0 in the
spectrum of $H(0.1,\vec{s})$, $H(1,\vec{s})$ and $H(2.5,\vec{s})$) is
shown in the bottom row of Fig.~\ref{fig:spec_flow}.  As predicted by
Theorem~\ref{T:Curved}, the point $\vec{s} = (0,0)$ is a minimum,
saddle point and maximum correspondingly.

\subsection{An application: magnetic--nodal theorem}
\label{sec:magnetic}

We will show that a recent theorem of Berkolaiko and Colin de
Verdi\`ere, which already has two different but complicated proofs
\cite{Ber_apde13,Col_apde13}, is a simple consequence of the results
of this paper.  We start with a simple example.

\begin{example}
  \label{ex:lasso}
  Consider the matrix
  \begin{equation*}
    H(\alpha) =
    \begin{pmatrix}
      q_1 & -1 & 0 & 0\\
      -1 &q_2 & -1 & -1\\
      0 & -1 & q_3 & -e^{i\alpha}\\
      0 & -1 & -e^{-i\alpha} & q_4
    \end{pmatrix},
  \end{equation*}
  which is a matrix representation of the \term{magnetic Schr\"odinger
    operator} on the graph in Fig.~\ref{fig:lasso}, top left (precise
  definition will be given below).  We are interested in the number of
  \term{sign flips} of the $n$-th eigenvector $f$ of $H(0)$, which
  in this case can be described as the number of pairs
  $(j,k) \in \{(1,2), (2,3), (2,4), (3,4)\}$ such that $f_jf_k < 0$.
  We denote this number by $\phi_n$.

  It was discovered in \cite{Ber_apde13} that $\phi_n$ is closely
  related to local behavior of eigenvalues of $H(\alpha)$, shown in
  Fig.~\ref{fig:lasso}, right.  Whether the eigenvalue
  $\lambda_n(H(\alpha))$ experiences a minimum or a maximum at
  $\alpha=0$ is determined by whether the quantity
  $\sigma_n := \phi_n - n + 1$ is $0$ or $1$ (a part of the result is
  that $\sigma_n$ can only be $0$ or $1$ in this case).  In other
  words, $\phi_n - n + 1$ is the Morse index of
  $\lambda_n(H(\alpha))$.

  \begin{figure}
    \includegraphics{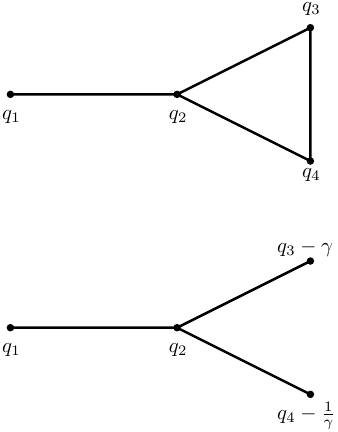}
    \hspace{0.5cm}
    \includegraphics[scale=0.5]{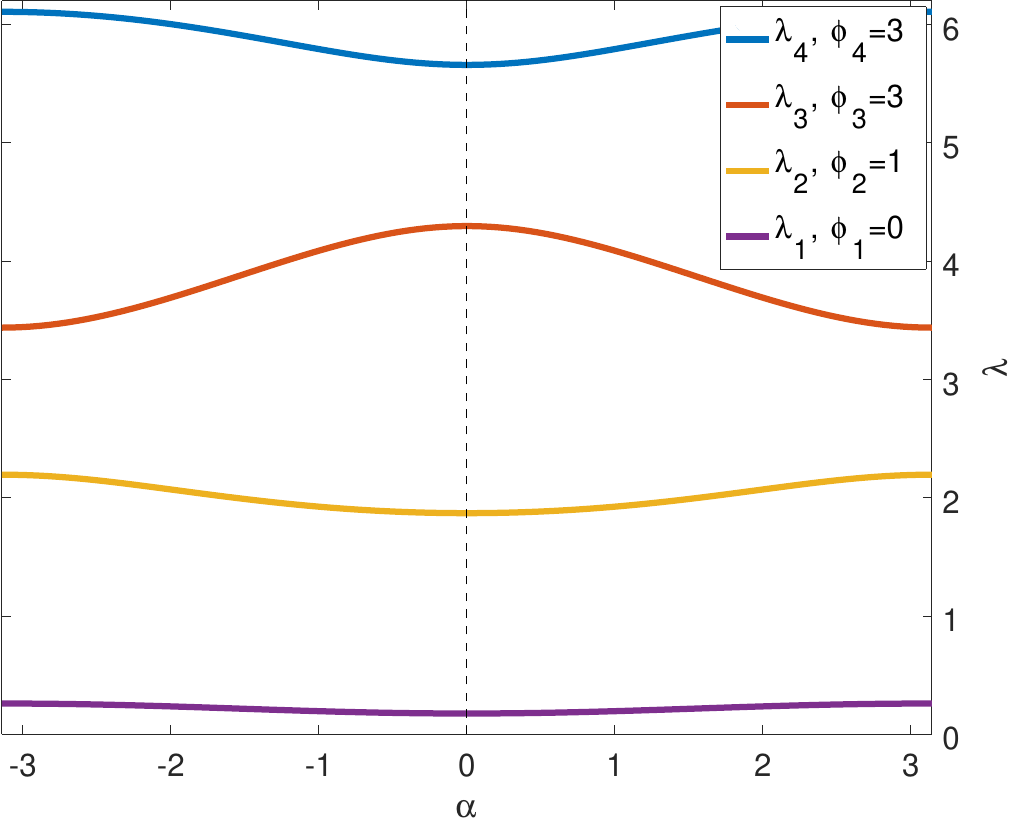}
    \caption{Top left: the graph corresponding to operator $H$ in
      Example~\ref{ex:lasso}.  Bottom left: the graph corresponding to
      operator $S$.  Right: eigenvalues of the matrix $H(\alpha)$ as
      function of $\alpha \in [-\pi, \pi]$; the legend lists the
      ``sign flip counts'' for the corresponding eigenfunction of
      $H(0)$.  We used $q_1=1$, $q_2=2$, $q_3=4$ and $q_4=5$.}
    \label{fig:lasso}
  \end{figure}

  The relation to previous results comes from the fact that
  $H(\alpha)$ can be represented as
  \begin{equation*}
    H(\alpha) =
    \begin{pmatrix}
      q_1 & -1 & 0 & 0\\
      -1 &q_2 & -1 & -1\\
      0 & -1 & q_3-\gamma & 0 \\
      0 & -1 & 0 & q_4-1/\gamma
    \end{pmatrix}
    +
    \begin{pmatrix}
      0 & 0 & 0 & 0\\
      0 & 0 & 0 & 0\\
      0 & 0 & \gamma & -e^{i\alpha}\\
      0 & 0 & -e^{-i\alpha} & 1/\gamma
    \end{pmatrix}
    =: S + P(\alpha),
  \end{equation*}
  where $\gamma$ is adjusted so that $P(0)f = 0$ for a given
  eigenfunction $f$.  The matrix $S$ is a Schr\"odinger operator on
  the tree shown in Fig.~\ref{fig:lasso}, bottom left.  It was
  established by Fiedler \cite{Fie_cmj75} that any tree satisfies
  Sturm nodal theorem: the $n$-th eigenfunction has $n-1$ sign flips.
  The spectral shift of $H$ with respect to $S$ can then be
  interpreted as ``extra number of sign flips''\footnote{Under some
    simplifying assumptions, in the quantity $\sigma = \phi - (n-1)$,
    the number of sign flips $\phi$ remains the same --- since the
    eigenfunction $f$ is unchanged --- but the position $n$ of the
    eigenvalue in the spectrum may change due to the spectral shift},
  compared to the baseline number $n-1$.  On the other hand, the
  spectral shift is equal to the Morse index of $\lambda_n(\alpha)$ by
  Theorem~\ref{thm:allK_variation} (or
  Theorem~\ref{thm:main_additive}).
\end{example}

Let us now extend and formalize the above example.  Let $H$ be a real
symmetric $N\times N$ matrix representing the \term{Schr\"odinger operator}
(with generalized edge weights) on a connected graph
$\Gamma=(\cV, \cE)$ in the following sense,
\begin{itemize}
\item $\cV = \{1,\ldots,N\}$,
\item $H_{u,v} = H_{v,u}$,
\item for $u\neq v$,
  \begin{equation*}
    H_{u,v} \neq 0 \quad \Leftrightarrow\quad (u,v) \in \cE.
  \end{equation*}
\end{itemize}

Let $T$ be a spanning tree of $\Gamma$ and let
$C = \cE(\Gamma) \setminus \cE(T)$.  There are exactly
$\beta = |\cE|-|\cV|+1$ edges in the set $C$.  We assume the graph $\Gamma$ is
not a tree itself, i.e.\ $\beta > 0$.

Orient each edge in $C$ in an arbitrary fashion and order the set $C$.
Let $\vec\alpha$ be a point in the $\beta$-dimensional torus
$\mathbb{T}^\beta := (-\pi,\pi]^\beta$ and denote by $H(\vec\alpha)$
the \term{magnetic Schr\"odinger operator} obtained from $H$ by
letting
\begin{equation}
  \label{eq:magnetic}
  H(\vec\alpha)_{u_j, v_j} = e^{i\alpha_j} H_{u_j, v_j},
  \qquad H(\vec\alpha)_{v_j, u_j} = e^{-i\alpha_j} H_{v_j, u_j},
\end{equation}
if $(u_j,v_j) \in C$ and $H(\vec\alpha)_{u,v} = H_{u.v}$ otherwise.
We note that $H(0) = H$.

\newcommand{\ao}{\vec\alpha^\circ}
\begin{theorem}[And extended version of \cite{Ber_apde13,Col_apde13}]
  \label{thm:mag_nodal}
  Let $\ao \in \{0,\pi\}^\beta$, let $\lo$ be the $n$-th eigenvalue in
  the spectrum of $H\left(\ao\right)$.  Assume $\lo$ is simple and the
  corresponding eigenvector $f$ has no zero entries.
  Consider $\Lambda(\vec\alpha)$, the smooth continuation of the eigenvalue
  $\lo$ in the spectrum of $H(\vec\alpha)$.  Then
  \begin{enumerate}
  \item $\Lambda(\vec\alpha)$ has a critical point $\vec\alpha=\ao$,
  \item the Morse index of the critical point is equal to the \term{nodal
    surplus} of $f$ defined as
    \begin{equation}
      \label{eq:nodal_surplus_def}
      \sigma = \phi(f,\Gamma) - (n-1), 
    \end{equation}
    where $\phi(f,\Gamma)$ is the \term{flip count} of $f$ with respect
    to the graph $\Gamma$,
    \begin{equation}
      \label{eq:flip_count_def}
      \phi(f,\Gamma)
      = \#\left\{(u,v)\in\cE \colon -f_u f_v H(\ao)_{u,v} < 0\right\}.
    \end{equation}
  \end{enumerate}
\end{theorem}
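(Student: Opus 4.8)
The plan is to realize the magnetic Schr\"odinger operator $H(\vec\alpha)$ as a member of a family $S + K^*\Omega K$ of the type treated by Theorem~\ref{thm:main_additive} (or, more precisely, by Theorem~\ref{T:Curved}/\ref{T:Curved2}), so that the assertions about the critical point and its Morse index become immediate consequences. First I would carry out the gauge/elimination step illustrated in Example~\ref{ex:lasso} in general: for each non-tree edge $(u_j,v_j)\in C$, write the $2\times2$ block of $H(\vec\alpha)$ supported on the pair $\{u_j,v_j\}$ together with the diagonal correction as $K_j^*\Omega_j K_j$, where $K_j$ is (essentially) a rank-one row vector supported on $\{u_j,v_j\}$ carrying the phase $e^{i\alpha_j}$, and $\Omega_j$ is a fixed $1\times1$ (or small) invertible self-adjoint factor independent of $\vec\alpha$; adjusting the associated scalar $\gamma_j$ as in the example ensures $K_j(\ao)f = 0$. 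Summing over $j\in C$ gives $H(\vec\alpha) = S + K(\vec\alpha)^*\Omega K(\vec\alpha)$ with $\Omega = \bigoplus_j \Omega_j$ and $S$ the Schr\"odinger operator on the spanning tree $T$; because $\ao\in\{0,\pi\}^\beta$, the operator $K(\ao)$ is real and annihilates $f$, and since $f$ has no zero entries, $S$ has the same eigenfunction $f$ at $\lo$. Finite dimensionality makes all the index/essential-spectrum hypotheses automatic.

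Next I would check the transversality hypothesis of Theorem~\ref{T:Curved} (or at least Theorem~\ref{T:Curved2}). The manifold here is $\cN = \{K(\vec\alpha) : \vec\alpha\in\mathbb{T}^\beta\}$, a $\beta$-real-dimensional submanifold of $\cC$, and $\Pi: T_{K(\ao)}\cN \to F$ is the map $K\mapsto Kf$. Differentiating $K(\vec\alpha)$ in $\alpha_j$ at $\ao$ and applying to $f$ yields a vector in $\cK$ proportional to $f_{u_j}$ or $f_{v_j}$ times the $j$-th basis vector of $\cK$; since $f$ has no zero entries these $\beta$ images are linearly independent in $\cK$, so $\Pi$ is injective on $T_{K(\ao)}\cN$. (One should note $\dim_\R F = 2\beta$ as a real vector space since $\cK\cong\C^\beta$, whereas $\cN$ is only $\beta$-real-dimensional; this is exactly why one invokes Theorem~\ref{T:Curved2} with $\Pi$ surjective onto its image, or equivalently restricts to the real span — either way the hypothesis "$\Pi|_{T_{K_0}\cN}$ injective" of Theorem~\ref{T:Curved} holds, which is what is needed to get the Morse index on $T_{K_0}\cN$ itself.) Granting this, Theorem~\ref{T:Curved} gives: $\ao$ is a critical point of $\Lambda$, and its Morse index equals $\sigma + i_-(\Omega)$ with $\sigma = i_-(S-\lo) - i_-(H(\ao)-\lo)$.

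It then remains to identify $\sigma + i_-(\Omega)$ with the nodal surplus $\phi(f,\Gamma) - (n-1)$. Here I would use two classical inputs. First, Fiedler's theorem: the Schr\"odinger operator $S$ on the tree $T$ satisfies the Sturm nodal theorem, so the eigenfunction at $\lo$ (namely $f$, which is the $k$-th eigenfunction of $S$ for $k = n - \sigma$) has exactly $k-1 = n-\sigma-1$ sign flips along the edges of $T$; equivalently $i_-(S-\lo) = n-\sigma$ while $n$ is the index of $\lo$ for $H(\ao)$. Second, a bookkeeping identity relating $\phi(f,\Gamma)$, the sign flips of $f$ on $T$, and the contributions of the non-tree edges together with $i_-(\Omega)$: for a real eigenvector $f$ with no zero entries and $\ao\in\{0,\pi\}^\beta$, each non-tree edge $(u_j,v_j)$ contributes to the flip count \eqref{eq:flip_count_def} according to the sign of $-f_{u_j}f_{v_j}H(\ao)_{u_j,v_j}$, and the sign of the corresponding scalar factor $\Omega_j$ (determined by $\gamma_j$) is precisely correlated with whether that edge is a flip; summing, $i_-(\Omega) = \#\{$non-tree flips$\}$ minus a correction absorbed into the tree count. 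Combining the two inputs, $\sigma + i_-(\Omega) = \big(i_-(S-\lo) - n\big) + i_-(\Omega) = \phi(f,\Gamma) - (n-1)$.

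The main obstacle is the last identification — specifically, getting the signs exactly right in the decomposition $H(\ao) = S + K(\ao)^*\Omega K(\ao)$: one must choose the $1\times1$ factors $\Omega_j$ (equivalently the elimination parameters $\gamma_j$) so that (a) $K_j(\ao)f=0$, (b) the diagonal corrections to $S$ do not create extra sign issues on the tree, and (c) the sign of $\Omega_j$, and hence its contribution to $i_-(\Omega)$, matches the "is edge $j$ a flip of $f$" indicator used in \eqref{eq:flip_count_def}. This is a finite, explicit but slightly delicate sign-chase; once it is pinned down, everything else is a direct invocation of Theorem~\ref{T:Curved}, Fiedler's theorem, and the definition of nodal surplus. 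I would also remark that the restriction $\ao\in\{0,\pi\}^\beta$ is exactly what makes $K(\ao)$ real (so that $S$ is a genuine real Schr\"odinger operator to which Fiedler applies) and what makes the $\Omega_j$ real; without it the reduction to $S + K^*\Omega K$ with fixed $\Omega$ would still work but the nodal interpretation of $S$ would be lost.
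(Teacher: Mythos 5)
Your strategy is the same as the paper's: decompose $H(\vec\alpha)=S+K(\vec\alpha)^*\Omega K(\vec\alpha)$ edge-by-edge over the non-tree edges $C$, with $S$ the Schr\"odinger operator on the spanning tree, apply Theorem~\ref{T:Curved} to get Morse index $\sigma+i_-(\Omega)$, and then convert via Fiedler's two facts ($\phi(f,T)=m-1$ and simplicity of $\lo$ for trees) plus the identity $\phi(f,\Gamma)=\phi(f,T)+i_-(\Omega)$. Two points need repair, however. First, the "main obstacle" you defer --- choosing $K_j$ and the signs $\Omega_j$ so that $K_j(\ao)f=0$, $S$ is $\vec\alpha$-independent and supported on the tree, and $\sign\Omega_j$ equals the flip indicator of edge $j$ --- is the actual content of the construction; the paper resolves it explicitly by setting $s_e=\sign\bigl(-H(\ao)_{v_1,v_2}f_{v_1}f_{v_2}\bigr)$, $p_e=\sqrt{\bigl|H(\ao)_{v_1,v_2}f_{v_2}/f_{v_1}\bigr|}$, and taking $K(\vec\alpha)_{e,\cdot}$ supported on $\{v_1,v_2\}$ with entries $p_es_e$ and $e^{i(\alpha_e-\alpha_e^\circ)}H(\ao)_{v_1,v_2}/p_e$, so that $\Omega=\mathrm{diag}(s_e)$ and $i_-(\Omega)$ is exactly the number of non-tree flips. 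Without writing this down the proof is not complete, though your outline of what the construction must achieve is correct.

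Second, and more substantively: your handling of the dimension mismatch is wrong as stated. Theorem~\ref{T:Curved} requires $\Pi:T_{K_0}\cN\to F$ to be an \emph{isomorphism} and Theorem~\ref{T:Curved2} requires it to be \emph{surjective onto $F$}; "injective" is the hypothesis of neither, and injectivity alone does not suffice in general --- restricting a quadratic form to a proper real subspace of $F$ can lose negative directions, so the Morse index on $T_{K_0}\cN$ could a priori be smaller than $\sigma+i_-(\Omega)$. The paper's fix is to first complexify the parameter, i.e.\ consider $H^{\C}(\vec\alpha)=S+K(\vec\alpha)^*\Omega K(\vec\alpha)$ for $\vec\alpha\in\C^\beta$, for which $\Pi$ \emph{is} an isomorphism onto $F\cong\C^\beta$, and then observe that the resulting Hessian operator $Q=\Omega-\Omega K_0(H_0-\lo)^+K_0^*\Omega$ has \emph{real} entries (this uses $\ao\in\{0,\pi\}^\beta$), so that its restriction to the real slice $\vec\alpha\in\R^\beta$ --- whose image under $\Pi$ is a real form of $\C^\beta$ --- is congruent to $Q$ on $\R^\beta$ and has the same count of negative eigenvalues. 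You need the reality of $Q$, not just injectivity of $\Pi$, to conclude; that observation is missing from your argument.
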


\begin{proof}
  For an $e = (v_1,v_2) \in C$, define
  \begin{equation*}
    s_e = \sign\left(-H(\ao)_{v_1,v_2}f_{v_1}f_{v_2}\right),
    \qquad
    p_e = \sqrt{\left|H(\ao)_{v_1,v_2} {f_{v_2}}/{f_{v_1}}\right|},
  \end{equation*}
  and introduce a $\beta\times|\cV|$ matrix $K(\vec\alpha)$
  \begin{equation*}
    K(\vec\alpha)_{e, v} =
    \begin{cases}
      p_e s_e
      & \mbox{if } v = v_1,\\
      e^{i(\alpha_e-\alpha_e^\circ)} H(\ao)_{v_1,v_2} / p_e
      & \mbox{if } v = v_2,\\
      0 & \mbox{otherwise},
    \end{cases}
    \qquad\mbox{where }
    e=(v_1,v_2) \in C.
  \end{equation*}
  A direct calculation shows that $K(\ao) f = \vec0$.

  Let $\Omega$ be the diagonal $\beta\times\beta$ matrix of signs
  $s_e$ and consider the matrix
  \begin{equation}
    \label{eq:Sdef}
    S := H(\vec\alpha) - K(\vec\alpha)^* \Omega K(\vec\alpha),
    \qquad \vec\alpha \in \R^\beta.
  \end{equation}
  The elements of $S$ corresponding to the edges $e\in C$ are zero;
  moreover the matrix $S$ is independent of $\vec\alpha$.  In other
  words, the matrix-function
  \begin{equation}
    \label{eq:HC}
    H^{\C}(\vec\alpha) := S + K(\vec\alpha)^* \Omega K(\vec\alpha),
    \qquad \vec\alpha \in \C^\beta
  \end{equation}
  coincides with $H(\vec\alpha)$ for real $\vec\alpha$.

  Consider the function
  $\Lambda^\C(\vec{\alpha}) = \lambda_n\big(H^\C(\vec\alpha)\big)$.
  By Theorems~\ref{thm:main_additive} and \ref{T:Curved}\footnote{One
    checks the isomorphism condition by calculating partial
    derivatives of $K(\vec\alpha)f$ with respect to $\alpha_j$ and
    noting that $f$ has no zero entries.} its Hessian at
  $\vec\alpha = \ao$ is the operator \eqref{eq:Hessian_operator0}
  which is a matrix with real entries.  Being real, it coincides with
  the Hessian of the function
  $\Lambda(\vec{\alpha}) = \lambda_n\big(H(\vec\alpha)\big)$ of the
  real argument.  Furthermore, its Morse index $\mu$ is equal to
  $m-n+\omega_-$, where $m$ is such that $\lo = \lambda_m(S)$ and
  $\omega_-$ is the number of $e\in C$ with $s_e<0$.

  The graph corresponding to the matrix $S$ is the spanning tree
  $T = \Gamma \setminus C$ we chose, and we have
  \begin{equation*}
    \phi(f, \Gamma) = \phi(f, T) + \omega_-.
  \end{equation*}
  Because the matrix $T$ is acyclic and the eigenfunction $f$ has no
  zero entries, the eigenvalue $\lo$ is simple in the spectrum of $S$,
  see \cite{Fie_cmj75}, and the critical point $\ao$ of the function
  $\Lambda(\vec\alpha)$ is non-degenerate.

  The same paper
  \cite{Fie_cmj75} also established that $\phi(f, T) = m-1$, where $m$
  is as above, i.e.\ the number of the $\lo$ in the spectrum of $S$.
  Combining all of the above, we get
  \begin{equation*}
    \mu = m-n+\omega_- = 1 + \phi(f, T) - n + \omega_-
    = \phi(f, \Gamma) - (n-1).
  \end{equation*}
\end{proof}

\section*{Acknowledgment}

An engaging joint research project with Yaiza Canzani, Graham Cox and
Jeremy Marzuola at an AIM-sponsored SQuaREs meeting has led the first
author to make conjectures that resulted in this paper.  Stimulating
discussions with Lior Alon, Ram Band, Yves Colin de Verdi\'ere, Alex
Elgart, Fritz Gesztesy, Dirk Hundertmark, Yuri Latushkin, Alxander
Pushnitski, Selim Sukhtaiev, Anna Vershinina and Igor Zelenko have
helped us along the way.  The authors are also grateful to the
reviewer for several improving suggestions.

The first author is partially supported by the NSF grant DMS--1815075.
The second author thanks NSF for the support from grants DMS--1517938
and DMS--2007408.

\bibliographystyle{myalpha}
\bibliography{bk_bibl,additional}

\end{document}